\chardef\bslash=`\\ 
\newtheorem{thm}{Theorem}[section]
\newtheorem{cor}[thm]{Corollary}
\newtheorem{lem}[thm]{Lemma}
\newtheorem{prop}[thm]{Proposition}
\theoremstyle{definition}
\newtheorem{defn}{Definition}[section]
\theoremstyle{remark}
\newtheorem{rem}{Remark}[section]
\newcommand{\thmref}[1]{Theorem~\ref{#1}}
\newcommand{\lemref}[1]{Lemma~\ref{#1}}
\newcommand{\defnref}[1]{Definition~\ref{#1}}
\newcommand{\remref}[1]{Remark~\ref{#1}}
\newcommand{\eval}[2][\right]{\relax
  \ifx#1\right\relax \left.\fi#2#1\rvert}
\def\R{{\mathbf{R}}}
\def\C{{\mathbf{C}}}
\def\t{{\mathbf{T}}}
\newcommand\go{G^{(0)}}
\title{The C*-algebra of a twisted groupoid extension}
\author{Jean N. Renault}
\address{Institut Denis Poisson (UMR 7013)\\
  Universit\'e d'Orl\'eans et CNRS \\ 45067
  Orl\'eans Cedex 2, FRANCE}
\email{jean.renault@univ-orleans.fr}
\keywords{groupoid extension, twist, Mackey machine, multiplier representation.}
\begin{document}
\vskip5mm
\begin{abstract} This article extends the main results of \cite{ikrsw:extensions} to the case of a twisted groupoid. More precisely, it  gives a decomposition of the C*-algebra of a twisted locally compact groupoid with Haar system in presence of a normal subgroupoid. When the normal subgroupoid and the twist over it are abelian, one obtains another twisted groupoid C*-algebra. This is applied to C*-algebraic deformation quantization and to multiplier representations of locally compact abelian groups.
\end{abstract}
\maketitle
\markboth{Jean Renault}
{Twisted groupoid extensions}

\renewcommand{\sectionmark}[1]{}

\section*{Introduction.}

The Mackey normal subgroup analysis (also called the Mackey machine) describes the representations of a group $G$ in terms of a normal subgroup $S$ and of the quotient $H=G/S$. 
A semidirect product of groups 
$G=S\rtimes H$
such as the group of rigid motions or the Poincar\'e group is the simplest example. From the C*-algebraic perspective, it gives a description of $C^*(G)$ as a crossed product. In the simple case of a semidirect product, we easily have
$$C^*(S\rtimes  H)=C^*(S)\rtimes H=C^*(H, C^*(S))$$

A semidirect product is a trivial extension. When the extension is not trivial, a twist appears. We use here the formalism of twisted crossed products as defined by P.~Green in \cite{gre:local}.
\begin{thm}\cite[Proposition 1] {gre:local}\label{Green} Let $S$ be a closed normal subgroup of a locally compact group $G$. Then
$$C^*(G)=C^*(G, C^*(S),\tau_S)$$
 where the right handside is a twisted crossed product.
 \end{thm}
When $S$ is abelian, one can go one step further, namely using the Gelfand transform, one has
 $$C^*(S\rtimes  H)=C^*(H, C^*(S))=C^*(H, C_0(\hat S))=C^*(\hat S\rtimes H)$$
The last term is a groupoid C*-algebra, where the groupoid $\hat S\rtimes H$ has less isotropy than the initial group $S\rtimes  H$. One may want to iterate the process. It is then necessary to extend the Mackey machine to a groupoid $G$ rather than to a group. The original motivation of this work was the analysis of nilpotent group C*-algebras. However, the examples and the applications, presented in the last section, are limited here to the Heisenberg group and to twisted locally compact abelian groups. This article is based on a joint work \cite{ikrsw:extensions} with M.~Ionescu, A.~Kumjian, A.~Sims and D.~ Williams which considered the case of an untwisted groupoid. The proofs for twisted groupoids are essentially the same; moreover, the twisted case can be deduced directly from the untwisted case, as in \cite[Proposition 3.5]{ikrsw:pushouts}. Nevertheless, it seems preferable to give a complete presentation of the general result for further references and for applications. The main results give isomorphisms of C*-algebras. It is equally important to understand these results as correspondences between representations of groupoids or groupoid dynamical systems. This is done here, using  the disintegration theorem of \cite{ren:rep} as the essential tool. This theorem is recalled in the first section and most of the paper relies on \cite{ren:rep} only. The main theorem \ref{main} is illustrated by two applications, given in the last section. First, it shows that two C*-algebraic deformation quantization of a symplectic linear space, namely Connes' deformation based on the tangent groupoid and Rieffel's strict deformation quantization, are the same. Second, the main results of \cite{bk:multiplier} on multiplier representations of locally compact abelian groups are easily recovered, in a fashion very close to \cite[Section 7]{gre:local}, but with the advantage of giving explicit pictures of the C*-algebra of the twisted abelian group.\\

 In order to use the disintegration theorem \cite[4.1]{ren:rep}, we shall usually assume that the locally compact groupoids are second countable and that the C*-algebras and C*-bundles appearing in the definition of a groupoid C*-dynamical system are separable. Note however that, in the framework of groups and twisted group crossed products studied in \cite{gre:local}, one can write a representation as an integrated representation without this assumption. The following notation will be frequently used: given two maps $p:X\to T$ and $q:Y\to T$ with the same range, their fibered product over $T$ is denoted by $X*Y$ when there is no ambiguity about the maps. To distinguish bundles of algebras (or of linear spaces) from algebras, algebra bundles will usually be denoted by calligraphic letters such as $\mathcal A$ while algebras will be denoted by Roman letters such as $A$. For example, if $p:G\twoheadrightarrow X$ is a bundle of groupoids with Haar systems, ${\mathcal C}_c(G)$ and ${\mathcal C}^*(G)$ denote the bundles with respective fibers $C_c(G(x))$ and $C^*(G(x))$.  On the other hand, $C_c(G)$ and $C^*(G)$ denote the usual $*$-algebras of the groupoid $G$, which are the sectional algebras of the above bundle.

\section{Groupoid C*-dynamical systems and their C*-algebras}
\begin{defn}
A groupoid extension is a short exact sequence of groupoids
\[S\rightarrowtail G \twoheadrightarrow H\]
with common unit space $G^{(0)}$. Equivalently, an extension of the groupoid $H$ is a surjective homomorphism $\pi:G\twoheadrightarrow H$ such that $\pi^{(0)}:G^{(0)}\to H^{(0)}$ is a bijection (we shall assume that $G^{(0)}=H^{(0)}$ and that $\pi^{(0)}$ is the identity map). We shall write $\dot\gamma=\pi(\gamma)$ when there is no ambiguity about the projection map.
\end{defn}
Then $S=\text{Ker}(\pi)$ is a subgroup bundle of the isotropy group bundle $G'$. Moreover, it is normal in the sense that for all compatible pair $(\gamma,s)\in G*S$, $\gamma s \gamma^{-1}$ belongs to $S$. Note that subgroupoids which are normal in this sense are necessarily subgroup bundles of the isotropy group bundle. Note also that $H$ is naturally isomorphic to the quotient groupoid $G/S$. Since the normal subgroupoid $S$ of $G$ determines the extension, we shall often use the terminology of normal subgroupoid rather than extension. Then $H$ is the quotient groupoid.  We assume that $H$ and $G$ are locally compact Hausdorff groupoids and that $\pi$ is continuous and open. In particular $S$ is a closed normal subgroupoid of $G$. We also assume that $H$ has a Haar system $\alpha$ (here and in the rest of the paper, Haar systems are assumed to be continuous, as in \cite[Definition I.2.2]{ren:approach}) and that $S$ has a Haar system $\beta$. There is a homomorphism $\delta: G\to\R_+^*$ such that for all $\gamma\in G$, $\gamma\beta^{s(\gamma)}\gamma^{-1}=\delta(\gamma)\beta^{r(\gamma)}$. This homomorphism is called the modular cocycle of the extension. Its cohomology class does not depend on the choice of $\beta$. For all $x\in G^{(0)}$, its restriction to the group $S_x$ is the modular function of  $S_x$. It is continuous (see \cite[Lemma 2.4]{ikrsw:extensions}). Given $\alpha$ and $\beta$, we define the Haar system $\lambda$ for $G$ by the formula:
\[\int f(\gamma) d\lambda^x(\gamma)=\int_H\int_S f(\gamma t)d\beta^{s(\gamma)}(t)d\alpha^x(\dot\gamma)\]

\begin{defn}\label{extension}
 Under these assumptions, we say that $S\rightarrowtail G \twoheadrightarrow H$ is a locally compact groupoid extension with Haar systems.
 \end{defn}
 
 It will be convenient in the sequel to define an extension with Haar system as a pair $(G,S)$ where $S$ is a closed normal subgroupoid of the locally compact groupoid $G$ which admits a Haar system and such that the quotient groupoid $H=G/S$ admits a Haar system. We shall keep this convention and this notation throughout the paper.

\begin{defn}\label{dynamical system}\cite{ren:rep}
 A groupoid C*-dynamical system (or dynamical system for short) is a triple $(G, S, {\mathcal A})$ where $G$ is a locally compact groupoid, $S$ is a closed normal subgroupoid, and $\mathcal A$ is an upper semi-continuous bundle of C*-algebras over $G^{(0)}$ endowed with a continuous action $G*{\mathcal A}\to {\mathcal A}$ such that  $S$ is unitarily implemented in the multiplier algebra bundle $M({\mathcal A})$, meaning the existence of a bundle homomorphism $\chi$ from $S$ to the unitary bundle of $M({\mathcal A})$, such that
\begin{enumerate}
 \item the map $S*{\mathcal A}\to {\mathcal A}$ sending $(s,a)$ to $\chi(s)a$ is continuous;
 \item $s.a = \chi(s)a\chi(s)^{-1}$ for all $(s,a) \in S*{\mathcal A}$;
 \item $\chi(\gamma s\gamma^{-1}) = \gamma.\chi(s)$ for all  $(\gamma,s) \in
G*S$.
\end{enumerate}
\end{defn}
In \cite[Section 3]{ren:rep}, it was assumed that the kernel $S$ was abelian, in the sense that it was a bundle of abelian groups. However, as shown in \cite{ikrsw:extensions}, this assumption is not necessary and most results of \cite{ren:rep} remain valid.\\

Recall the construction of the crossed products, as in \cite[Section 3]{ren:rep}. Let $(G, S, {\mathcal A})$ be a groupoid C*-dynamical system. We assume that $H=G/S$ has a Haar system $(\alpha^x)_{x\in G^{(0)}}$ and that $S$ has a Haar system $(\beta^x)_{x\in G^{(0)}}$. One first form the $*$-algebra $C_c(G, S, {\mathcal A})$. Its elements are continuous functions $f:G\to {\mathcal A}$ such that
\begin{enumerate}
 \item $f(\gamma)$ belongs to $A_{r(\gamma)}$ for all $\gamma\in G$;
 \item $f(s\gamma)=f(\gamma)\chi(s^{-1})$ for all $(s,\gamma)\in S*G$;
 \item $f$ has compact support modulo $S$.
\end{enumerate}
The product and the involution are respectively given by
\[f*g(\gamma)=\int f(\tau)[\tau.g(\tau^{-1}\gamma)] d\alpha^{r(\gamma)}(\dot\tau)\]
and
\[f^*(\gamma)=\gamma.(f(\gamma^{-1}))^*\]
One defines the I-norm as
\[\|f\|_I=\max\big(\sup_x\int \|f(\gamma)\|d\alpha^x(\dot\gamma),\sup_x\int \|f(\gamma^{-1})\|d\alpha^x(\dot\gamma)\big).\]
The crossed product C*-algebra $C^*(G,S,{\mathcal A})$ is the completion of $C_c(G,S,{\mathcal A})$ for the full C*-norm which is defined as $\|f\|=\sup_L \|L(f)\|$ where $L$ runs into the set of all representations of $C_c(G,S,{\mathcal A})$ in Hilbert spaces which are I-norm decreasing. 

\begin{defn}\label{representation}\cite[D\'efinition 3.4]{ren:rep} A representation of a dynamical system $(G,S,{\mathcal A})$ is a pair $(\mu,{\mathcal H})$ where $\mu$ is a measure on $G^{(0)}$, quasi-invariant with respect to $(H,\alpha)$ with module $\Delta$ and ${\mathcal H}$ is a measurable Hilbert bundle over $G^{(0)}$, endowed with a unitary representation of $G$ and a representation of ${\mathcal A}$ such that
\begin{enumerate}
 \item $\gamma(a\xi)=(\gamma a)(\gamma\xi)$ for all $(\gamma, a,\xi)\in G*{\mathcal A}*{\mathcal H}$;
 \item $s\xi=\chi(s)\xi$ for all $(s,\xi)\in S*{\mathcal H}$.
\end{enumerate}
It is called non-degenerate if the representation of ${\mathcal A}$ is non-degenerate.
 \end{defn}
 
 In \cite{ren:rep}, it is assumed that ${\mathcal H}$ is defined over a Borel subset $U\subset G^{(0)}$ whose saturation $[U]$ is $\mu$-conull. However, as shown for example in \cite[Section 8]{wil:tk}, there is no loss of generality in assuming that ${\mathcal H}$ is defined over  $G^{(0)}$.

\begin{defn}\label{integrated} The integrated representation of the representation $(\mu,{\mathcal H})$ of the dynamical system $(G,S,{\mathcal A})$ is the representation $L$ of $C_c(G,S,{\mathcal A})$ on the Hilbert space $L^2(G^{(0)},\mu;{\mathcal H})$ such that for $f\in C_c(G, S, {\mathcal A})$ and $\xi,\eta\in L^2(G^{(0)},\mu;{\mathcal H})$, one has
\[(\xi\,|\,L(f)\eta)=\int (\xi\circ r(\gamma)\,|\, f(\gamma)\,[\gamma(\eta\circ s(\gamma))])\,\Delta^{-1/2}(\dot\gamma)\,d\alpha^x(\dot\gamma)d\mu(x)\]
for a.e. $x$.
\end{defn}

Note that the integrated representation of a non-degenerate representation is non-degenerate. A straightforward application of the Cauchy-Schwarz inequality shows that integrated representations are I-norm decreasing.  The disintegration theorem \cite[Theorem 4.1]{ren:rep} says that, if $G$ is second countable and $\mathcal A$ is separable, every non-degenerate representation $L$ which is  continuous for the inductive limit topology  is unitarily equivalent to an integrated representation and therefore is I-norm decreasing. Since I-norm decreasing representations are necessarily continuous for the inductive limit topology, the three properties: I-norm decreasing, continuous for the inductive limit topology and unitarily equivalent to an integrated representation, coincide in this case for non-degenerate representations of $C_c(G,S,{\mathcal A})$ and define exactly the non-degenerate representations of $C^*(G,S,{\mathcal A})$. The fact that every non-degenerate representation of $C^*(G,S,{\mathcal A})$ is unitarily equivalent to an integrated representation is also true under other assumptions, for example if $G$ is a group.

\section{Mackey analysis of a twisted groupoid C*-algebra.}

\subsection{Twists}

We have given earlier the general notion of an extension. The following special case has been introduced by Kumjian in \cite{kum:diagonals} in the framework of groupoids.

\begin{defn} A central groupoid extension 
$$G^{(0)}\times\t\rightarrowtail\Sigma\twoheadrightarrow G$$ 
where $\t$ is the group of complex numbers of module 1, is called a twist. Then, we say that $(G,\Sigma)$ is a twisted groupoid.
\end{defn} 
We need to distinguish arbitrary extensions as above and twists, because they do not play the same role in this study. While a twisted groupoid is denoted by $(G,\Sigma)$ where $\Sigma$ is the middle term, an arbitrary extension will be determined by a closed normal subgroupoid and denoted for example by $(G,S)$ where $S$ is the kernel of the extension.

\subsection{Twisted extensions}

Let $(G,\Sigma)$ be a twisted groupoid. Then $(\Sigma, G^{(0)}\times\t, G^{(0)}\times\C)$ is a groupoid dynamical system with the action $\sigma(s(\sigma), a)=(r(\sigma),a)$ for $(\sigma,a)\in\Sigma\times\C$ and $\chi(x,\theta)=\theta$ for $(x,\theta)\in G^{(0)}\times\t$. If $G$ is a locally compact groupoid with Haar system, we can construct the crossed product C*-algebra, which we denote by $C^*(G,\Sigma)$ rather than $C^*(\Sigma, G^{(0)}\times\t, G^{(0)}\times\C)$ and which we call the twisted groupoid C*-algebra. The principle of our version of Mackey analysis is to decompose this C*-algebra when $G$ possesses a closed normal subgroupoid $S$ endowed with a Haar system. This is a strong condition, which is often not satisfied by the isotropy bundle itself.

\begin{defn}\label{twisted extension}
 We call $(G,\Sigma, S)$ a twisted extension with Haar systems when $(G,\Sigma)$ is a twisted groupoid, $S$ is a closed normal subgroupoid of $G$ with Haar system and $G/S$ has a Haar system.
\end{defn}
In the sequel, we shall denote $H=G/S$ the quotient groupoid and $\pi: G\to H$ the quotient map.
We shall denote by $(\alpha^x)_{x\in G^{(0)}}$ [resp. $(\beta^x)_{x\in G^{(0)}}$] the Haar system of $H$ [resp. $S$ ]. We denote by
$(\lambda^x)_{x\in G^{(0)}}$ the Haar system of $G$ described earlier. The following diagram summarizes the situation:
\[\xymatrix{
G^{(0)}\times\t\ar@{^{(}->}[d] \ar@{=}[r] &G^{(0)}\times\ar@{^{(}->}[d] \t&\\
\Sigma_{|S}\ar@{->>}[d]\ar@{^{(}->}[r] &\Sigma \ar@{->>}[d]^p \ar@{->>}[r]^{\pi_\Sigma} &H\ar@{=}[d] \\
S\ar@{^{(}->}[r] &G \ar@{->>}[r]^\pi &H\\
}\]

\subsection{The tautological groupoid dynamical system}

It can be shown just as in the untwisted case (see for example \cite[Theorem 5.5]{lr:quantization}) that the twisted group bundle $(S,\Sigma_{|S})$ with Haar system $\beta$  defines an upper semi-continuous bundle of C*-algebras over $G^{(0)}$, with fibers $C^*(S_x,\Sigma_{|S_x})$  which we denote by  ${\mathcal C}^*(S,\Sigma_{|S})$. 

It is endowed with an action  $\Sigma*{\mathcal C}^*(S,\Sigma_{|S})\to {\mathcal C}^*(S,\Sigma_{|S})$, where
$$(\tau.f)(\sigma)=\delta(p(\tau))f(\tau^{-1}\sigma\tau),\quad{\rm for}\quad \tau\in\Sigma,\, f\in C_c(S_{s(\tau)},\Sigma_{|S_{s(\tau)}}), \,\sigma\in p^{-1}(S_{r(\tau)})$$
The introduction of $\delta$ is necessary in order to preserve the convolution product of the twisted group C*-algebras $C^*(S_x,\Sigma_{|S_x})$.
 
\begin{prop} The triple $(\Sigma, \Sigma_{|S}, {\mathcal C}^*(S,\Sigma_{|S}))$ is a groupoid C*-dynamical system, which we call the tautological groupoid dynamical system of the twisted extension. 
\end{prop}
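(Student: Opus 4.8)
The plan is to verify directly that the triple $(\Sigma,\Sigma_{|S},{\mathcal C}^*(S,\Sigma_{|S}))$ satisfies the three clauses of \defnref{dynamical system}. First I would record the structural prerequisites: $\Sigma$ is a locally compact groupoid, and $\Sigma_{|S}=p^{-1}(S)$ is a closed normal subgroupoid of $\Sigma$, being the preimage under the continuous open homomorphism $p$ of the closed normal subgroupoid $S$ of $G$ (normality transfers because $p(\tau\sigma\tau^{-1})=\dot\tau\,\dot\sigma\,\dot\tau^{-1}\in S$ whenever $\dot\sigma\in S$). Both $\Sigma_{|S}$ and $S$ are group bundles over $\go$. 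That ${\mathcal C}^*(S,\Sigma_{|S})$ is an upper semicontinuous bundle of C*-algebras over $\go$ (continuous when the fibres are amenable) I would take from the cited analogue \cite[Theorem 5.5]{lr:quantization}, as announced in the text.

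The bulk of the algebra is to show that $(\tau,f)\mapsto\tau.f$ is an action by C*-isomorphisms. I would first check well-definedness: for $\tau\in\Sigma$ and $f\in C_c(S_{s(\tau)},\Sigma_{|S_{s(\tau)}})$, the function $\sigma\mapsto\delta(p(\tau))f(\tau^{-1}\sigma\tau)$ is supported on $p^{-1}(S_{r(\tau)})$ and is $\t$-equivariant of the correct weight, hence lies in $C_c(S_{r(\tau)},\Sigma_{|S_{r(\tau)}})\subset A_{r(\tau)}$. The composition law $\tau_1.(\tau_2.f)=(\tau_1\tau_2).f$ and $x.f=f$ on units follow from $\delta$ being a homomorphism, and give invertibility. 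The essential point is that $\tau.(-)$ is a $*$-homomorphism, and here the factor $\delta(p(\tau))$ is exactly what is required: conjugation by $p(\tau)$ carries $\beta^{s(\tau)}$ to $\delta(p(\tau))\beta^{r(\tau)}$, so in the convolution $(\tau.f_1)*_\beta(\tau.f_2)$ the two factors $\delta(p(\tau))$ combine with the $\delta(p(\tau))^{-1}$ produced by the change of variable to leave a single $\delta(p(\tau))$, matching $\tau.(f_1*_\beta f_2)$; compatibility with the involution is checked similarly, using that $\delta$ is real and positive.

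For the unitary implementation I would take $\chi(\sigma)$, for $\sigma\in\Sigma_{|S_x}$, to be the canonical unitary multiplier of $C^*(S_x,\Sigma_{|S_x})$ implementing left translation, namely $(\chi(\sigma)f)(\sigma')=f(\sigma^{-1}\sigma')$. This is a bundle homomorphism into the unitary bundle of $M({\mathcal C}^*(S,\Sigma_{|S}))$, with $\chi(\sigma)^*=\chi(\sigma^{-1})$, $\chi(\sigma_1\sigma_2)=\chi(\sigma_1)\chi(\sigma_2)$, and $\chi(\theta\sigma)=\theta\chi(\sigma)$ for $\theta\in\t$ (so that $\chi$ intertwines the central $\t$ with scalars). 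Clause (ii), $\sigma.f=\chi(\sigma)f\chi(\sigma)^{-1}$, I would verify in the left regular representation: computing $\chi(\sigma)f\chi(\sigma)^{-1}$ yields $f(\sigma^{-1}\sigma'\sigma)$ together with a factor $\delta(p(\sigma))$ arising once more from the conjugation-induced change of the Haar measure $\beta^x$, which is precisely $\sigma.f$. Clause (iii), $\chi(\tau\sigma\tau^{-1})=\tau.\chi(\sigma)$, I would obtain by extending $\tau.(-)$ to multipliers and testing against elements $\tau.f$; here the $\delta$ factors from $\tau.(-)$ and $\tau^{-1}.(-)$ cancel, leaving exactly left translation by $\tau\sigma\tau^{-1}$.

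The routine but genuinely technical part, which I expect to be the main obstacle, is the topology rather than the algebra: establishing upper semicontinuity of the field of norms and the continuity clause (i), that $(\sigma,f)\mapsto\chi(\sigma)f$ and the action map $\Sigma*{\mathcal C}^*(S,\Sigma_{|S})\to{\mathcal C}^*(S,\Sigma_{|S})$ are continuous. These follow the pattern of the untwisted case in \cite{ikrsw:extensions} and the bundle construction of \cite{lr:quantization}, using the fundamental family $\Gamma=C_c(G,{\mathcal L})$ of sections and continuity of convolution against the $\pi$-system $\beta$; the care needed is in controlling the modular cocycle $\delta$ uniformly on compact sets. Alternatively, one may shortcut part of this by invoking the identification, announced just above, of ${\mathcal C}$ with the Fell bundle of this dynamical system, so that the Fell-bundle axioms already established for ${\mathcal C}$ transport the required continuity back to the triple.
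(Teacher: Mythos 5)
Your route is the same as the paper's: a direct verification of the axioms of \defnref{dynamical system}, with the topological points (upper semicontinuity of the C*-bundle, continuity of the action and of $(\sigma,f)\mapsto\chi(\sigma)f$) delegated to the cited analogues in \cite{ikrsw:extensions} and \cite{lr:quantization}. Your bookkeeping of the modular cocycle in the action map is also correct: the single factor $\delta(p(\tau))$ in $\tau.f$ is exactly what survives after the change of variable governed by $\tau\beta^{s(\tau)}\tau^{-1}=\delta(p(\tau))\beta^{r(\tau)}$, so $\tau.(-)$ is indeed a $*$-isomorphism.

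The genuine flaw is the normalization of $\chi$. You take $(\chi(\sigma)f)(\sigma')=f(\sigma^{-1}\sigma')$ and assert $\chi(\sigma)^*=\chi(\sigma^{-1})$. With the conventions in force here --- convolution against the left Haar system $\beta$ and involution $f^*(\sigma)=\overline{f(\sigma^{-1})}$ with no modular correction --- plain left translation is an invertible multiplier but not a unitary one: the induced right multiplier is $(f\chi(\sigma))(\tau)=\delta(p(\sigma))^{-1}f(\tau\sigma^{-1})$, whence $\chi(\sigma)^*=\delta(p(\sigma))^{-1}\chi(\sigma)^{-1}$ and $\chi(\sigma)^*\chi(\sigma)=\delta(p(\sigma))^{-1}$, which is $\neq 1$ whenever the fibre group $S_x$ fails to be unimodular (recall that $\delta|_{S_x}$ is the modular function of $S_x$). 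Since \defnref{dynamical system} requires $\chi$ to take values in the unitary bundle of $M({\mathcal A})$, and the proposition is stated for arbitrary kernels $S$, your $\chi$ does not qualify; the correct formula is the paper's $(\chi(\sigma)f)(\tau)=\delta^{1/2}(p(\sigma))f(\sigma^{-1}\tau)$, the half-power being forced by the symmetric involution convention. Your verifications of conditions (ii) and (iii) happen to survive with either normalization (the compensating powers of $\delta$ sit in the right multiplier), and in the abelian situation of \thmref{main} the fibres are unimodular so the discrepancy evaporates; but as written the identity $\chi(\sigma)^*=\chi(\sigma^{-1})$ is false in general. A minor further caveat: deducing the continuity clause (i) from the isomorphism ${\mathcal C}\cong{\mathcal B}$ is circular, since that isomorphism presupposes that the triple is a dynamical system.
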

 
 \begin{proof} The continuity of the action map $\Sigma*{\mathcal C}^*(S,\Sigma_{|S})\to {\mathcal C}^*(S,\Sigma_{|S})$ is proved just as \cite[Proposition 2.7]{ikrsw:extensions}. The action of $\Sigma_{|S}$ on ${\mathcal C}^*(S,\Sigma_{|S})$ is implemented by the bundle homomorphism $\chi: \Sigma_{|S}\to UM({\mathcal C}^*(S,\Sigma_{|S})$ which associates to $\sigma\in p^{-1}(S_x)$ the canonical unitary $\chi(\sigma)$ in the multiplier algebra of $C^*(S_x, p^{-1}(S_x))$. Explicitly, for  $f\in C_c(S_x,\Sigma_{|S_x})$ and $\sigma,\tau\in p^{-1}(S_x)$,
 \[(\chi(\sigma)f)(\tau)=\delta^{1/2}(p(\sigma))f(\sigma^{-1}\tau)\]
 One checks just as in \cite[Section 2]{ikrsw:extensions} that the conditions of \defnref{dynamical system} are satisfied. 
 \end{proof}

The relation between the twisted groupoid $(G,\Sigma)$ and the groupoid dynamical system $(\Sigma, \Sigma_{|S}, {\mathcal C}^*(S,\Sigma_{|S}))$ is very clear at the level of their representations:

\begin{prop}\label{representation} Let $(G,\Sigma)$ be a twisted locally compact groupoid with Haar system and let $S\subset G$ be a closed normal subgroupoid with Haar system. Then $(G,\Sigma)$ and $(\Sigma, \Sigma_{|S}, {\mathcal C}^*(S,\Sigma_{|S}))$ have the same representations.
\end{prop}

\begin{proof} As recalled earlier, a representation of $(G,\Sigma)$ is a pair  $(\mu,{\mathcal H})$ where $\mu$ is a measure on $G^{(0)}$ which is quasi-invariant with respect to $(G,\lambda)$ and $\mathcal H$ is a measurable Hilbert bundle over $G^{(0)}$ endowed with a unitary representation $\Sigma*{\mathcal H}\to {\mathcal H}$ such that $(x,t)\xi=t\xi$ for $(x,t)\in G^{(0)}\times\t$ and $\xi\in H_x$. On the other hand, a representation of $(\Sigma, \Sigma_{|S}, {\mathcal C}^*(S,\Sigma_{|S}))$ is a pair $(\underline\mu,\underline{\mathcal H})$ where $\underline\mu$ is a measure on $G^{(0)}$ which is quasi-invariant with respect to $(H,\alpha)$ and $\underline{\mathcal H}$ is a measurable Hilbert bundle over $G^{(0)}$ endowed with a unitary representation $\Sigma*\underline{\mathcal H}\to \underline{\mathcal H}$ and a non-degenerate representation ${\mathcal C}^*(S,\Sigma_{|S})*\underline{\mathcal H}\to \underline{\mathcal H}$ such that
\begin{enumerate}
 \item $\tau(a\xi)=(\tau a)(\tau\xi)$ for all $(\tau, a,\xi)\in G*{\mathcal C}^*(S,\Sigma_{|S})*{\mathcal H}$;
 \item $\sigma\xi=\chi(\sigma)\xi$ for all $(\sigma,\xi)\in (\Sigma_{|S})*{\mathcal H}$.
\end{enumerate}
We show that $(\mu,{\mathcal H})$ can be viewed as a representation of $(\Sigma, \Sigma_{|S}, {\mathcal C}^*(S,\Sigma_{|S}))$ and conversely. According to \cite[Proposition 5.3.10]{adr:amenable}, a measure $\mu$ on $G^{(0)}$ is quasi-invariant with respect to $(G,\lambda)$ if and only if it is quasi-invariant with respect to $(H,\alpha)$. Their respective Radon-Nikodym derivatives $\Delta$ and $\underline\Delta$ are related by $\Delta(\gamma)=\delta(\gamma)\underline\Delta\circ\pi(\gamma)$. Moreover a $(\Sigma, G^{(0)}\times\t)$-Hilbert bundle and a $(\Sigma, {\mathcal C}^*(S,\Sigma_{|S}))$-Hilbert bundle as defined above are in fact the same object. Indeed, let $\mathcal H$ be a $(\Sigma, G^{(0)}\times\t)$-Hilbert bundle. Then by restriction, it is a $(\Sigma_{|S}, G^{(0)}\times\t)$-Hilbert bundle. Just as in the group case, integration gives a one-to-one correspondence between $(\Sigma_{|S}, G^{(0)}\times\t)$-Hilbert bundle and non-degenerate ${\mathcal C}^*(S,\Sigma_{|S})$-Hilbert bundles. The explicit formula for $a\in C_c(S_x,\Sigma_{|S_x})$ and $\xi\in {\mathcal H}_x$ is
\[a\xi=\int a(\sigma)(\sigma\xi)\delta^{-1/2}(\dot\sigma)d\beta^x(\dot\sigma).\]
One checks easily that the relation (i) is satisfied. The relation (ii) simply expresses the fact that the representation of ${\mathcal C}^*(S,\Sigma_{|S})$ on $\mathcal H$ is the integrated form of the restriction to $\Sigma_{|S}$ of the unitary representation of $\Sigma$.
\end{proof}

One deduces that the C*-algebras are isomorphic:

\begin{thm}\label{decomposition}  Let $(G,\Sigma,S)$ be a second countable locally compact twisted groupoid extension with Haar systems. Then the twisted groupoid C*-algebra $C^*(G,\Sigma)$ is isomorphic to the crossed product C*-algebra $C^*(\Sigma, \Sigma_{|S}, {\mathcal C}^*(S,\Sigma_{|S}))$ of the dynamical system $(\Sigma, \Sigma_{|S}, {\mathcal C}^*(S,\Sigma_{|S}))$.
 \end{thm}

\begin{proof} The proof is essentially the same as in the untwisted case  \cite[Theorem 2.11]{ikrsw:extensions}. Consider the map 
 \[j: C_c(G,\Sigma)\to C_c(\Sigma, \Sigma_{|S}, {\mathcal C}^*(S,\Sigma_{|S}))\]
 which sends $f\in  C_c(G,\Sigma)$ to $g=j(f)$ where
 \[g(\tau)(\sigma)=\delta^{1/2}(p(\tau))f(\sigma\tau),\qquad (\sigma,\tau)\in \Sigma_{|S}*\Sigma.\]
It can be shown exactly as in  \cite[Section 2]{ikrsw:extensions} that it is a $*$-algebra homomorphism, continuous in the inductive limit topology and with dense range in in the inductive limit topology. Therefore it extends to a surjective $*$-homomorphism
\[j: C^*(G,\Sigma)\to C^*(\Sigma, \Sigma_{|S}, {\mathcal C}^*(S,\Sigma_{|S})).\]
Let $(\mu,{\mathcal H})$ be a representation of $(G,\Sigma)$ defining respectively the representation $L$ of $C^*(G,\Sigma)$ and  the representation $\underline L$ of $ C^*(\Sigma, \Sigma_{|S}, {\mathcal C}^*(S,\Sigma_{|S}))$. We show that $L=\underline L\circ j$. For a better legibility, we write $\dot\tau=p(\tau)$ for $\tau\in\Sigma$. Given $\xi\in L^2(G^{(0)},\mu,{\mathcal H})$ and $f\in C_c(G,\Sigma)$, we have a.e. $x$
\begin{align*}
L(f)\xi(x)
&=\int_G f(\tau)\, [\tau\xi(s(\tau))] \,\Delta^{-1/2}(\dot\tau)d\lambda^x(\dot\tau)\\
&=\int_H\int_S f(\tau\sigma) \, [\tau\sigma\xi(s(\tau))] \, \Delta^{-1/2}(\dot\tau\dot\sigma)d\beta^{s(\tau)}(\dot\sigma)d\alpha^x(\pi_\Sigma(\tau))\\ 
\end{align*}
On the other hand,
\begin{align*}
 \underline L(j(f))\xi(x)
 &=\int_H j(f)(\tau) \, [\tau\xi(s(\tau))] \,\underline\Delta^{-1/2}(\pi_\Sigma(\tau))d\alpha^x(\pi_\Sigma(\tau))\\
 &=\int_H\int_S j(f)(\tau)(\sigma) \, [\sigma\tau\xi(s(\tau))] \,\delta^{-1/2}(\dot\sigma)d\beta^{r(\tau)}(\dot\sigma)\underline\Delta^{-1/2}(\pi_\Sigma(\tau))d\alpha^x(\pi_\Sigma(\tau))\\
 &=\int_H\int_S\delta^{1/2}(\dot\tau)f(\sigma\tau) \, [\sigma\tau\xi(s(\tau))] \,\delta^{-1/2}(\dot\sigma)d\beta^{r(\tau)}(\dot\sigma)\underline\Delta^{-1/2}(\pi_\Sigma(\tau))d\alpha^x(\pi_\Sigma(\tau))\\
  &=\int_H\int_S\delta^{-1/2}(\dot\tau)f(\tau\sigma) \, [\tau\sigma\xi(s(\tau))] \,\delta^{-1/2}(\dot\sigma)d\beta^{s(\tau)}(\dot\sigma)\underline\Delta^{-1/2}(\pi_\Sigma(\tau))d\alpha^x(\pi_\Sigma(\tau))\\
  &=\int_H\int_Sf(\tau\sigma) \, [\tau\sigma\xi(s(\tau))] \,\delta^{-1/2}(\dot\tau\dot\sigma)\underline\Delta^{-1/2}(\pi_\Sigma(\tau))d\beta^{s(\tau)}(\dot\sigma)d\alpha^x(\pi_\Sigma(\tau))\\
  &=\int_H\int_Sf(\tau\sigma) \, [\tau\sigma\xi(s(\tau))] \,\Delta^{-1/2}(\dot\tau\dot\sigma)d\beta^{s(\tau)}(\dot\sigma)d\alpha^x(\pi_\Sigma(\tau))\\
 \end{align*}
This shows that $L=\underline L\circ j$. From the definition of the full norms, we deduce that $j$ is isometric.  Therefore, $j$ extends to a $*$-isomorphism
\[j: C^*(G,\Sigma)\to C^*(\Sigma, \Sigma_{|S}, {\mathcal C}^*(S,\Sigma_{|S})).\]
\end{proof}

In the case of an untwisted groupoid, one obtains

\begin{cor} Let $G$ be a second countable locally compact groupoid with Haar system and let $S\subset G$ be a closed normal subgroupoid with Haar system.  Then the groupoid C*-algebra $C^*(G)$ is isomorphic to the crossed product C*-algebra $C^*(G, S, {\mathcal C}^*(S))$ of the dynamical system $(G, S, {\mathcal C}^*(S))$.
\end{cor}
In the case of a group, one recovers Green's Theorem \ref{Green} given in the introduction.

\section{The abelian case}

When the normal subgroupoid $S$ in the locally compact twisted groupoid extension with Haar systems $(G,\Sigma,S)$ is abelian and the restriction $\Sigma_{|S}$ of the twist is also abelian, one can go one step further by using the Gelfand transform for the bundle of abelian C*-algebras ${\mathcal C}^*(S,\Sigma_{|S}))$. Let us first consider a general abelian groupoid C*-dynamical system.

\subsection{Abelian groupoid C*-dynamical system} 

\begin{defn} We say that a groupoid dynamical system $(G, S, {\mathcal A})$ is abelian when $\mathcal A$ is a continuous bundle of commutative C*-algebras over $G^{(0)}$.
 \end{defn}
This is not Green's definition of an abelian system as in \cite[Section 7]{gre:local}, which means that $H=G/S$, instead of $\mathcal A$, is abelian. Note also that we do not assume here that the groups $S_x$ are abelian.  The sectional C*-algebra $A=C_0(G^{(0)},{\mathcal A})$ is abelian, hence isomorphic to $C_0(Y)$, where $Y$ is the spectrum of $A$. The space $Y$ is fiberd above $G^{(0)}$. The bundle map, denoted by $s:Y\to G^{(0)}$, is continuous and open. The fiber above $x\in G^{(0)}$ is written $Y_x$. We write $A=C_0(Y)$ and $A_x=C_0(Y_x)$. The action of $G$ on ${\mathcal A}$ induces an action on $Y$ which we write as a right action so that $(\gamma.f)(y)=f(y\gamma)$ where $f\in C_0(Y_{s(\gamma)})=A_{s(\gamma)}$ and $y\in Y_{r(\gamma)}$. Because the action of $S$ is unitarily implemented and $\mathcal A$ is abelian, $S$ acts trivially on $\mathcal A$. Therefore, the action of $G$ is in fact an action of $H=G/S$. The semi-direct product groupoid of the action is denoted by $Y\rtimes H$. It carries a twist $\Sigma$, given by a pushout construction. We first observe that the homomorphism $\chi: S\to UM({\mathcal A})$ which implements the restriction of the action to $S$ gives a map 
$${\underline\chi}:Y*S\to \t\qquad (y,t)\mapsto (\chi(t))(y)$$
It is a continuous groupoid homomorphism which satisfies ${\underline\chi}(y,\gamma t \gamma^{-1})={\underline\chi}(y\gamma,t)$ for all $(\gamma,t)\in G*S$. Here is a general definition.

\begin{defn} Given a groupoid extension $S\rightarrowtail G \twoheadrightarrow H$ and an $H$-bundle of abelian groups $T$, we say that a group bundle morphism $\varphi : S\to T$ is equivariant if $\varphi(\gamma s\gamma^{-1})=\dot\gamma \varphi(s)$ for all $(\gamma,s)\in G*S$, where $\dot\gamma$ is the image of $\gamma$ in $H$.
\end{defn}

We give now the general pushout construction (the reader is directed to \cite{ikrsw:pushouts} for a full exposition). It is summarized by the following diagram.
\[\begin{CD}
S@>{}>{}>G @>{}>{}>H\\
 @V{\varphi}V{}V  @V{\varphi_{*}}V{}V@|\\
T @>{}>{}>\underline G@>{}>{}> H
\end{CD}\]

Here are the details.

\begin{prop} Let $S\rightarrowtail G \twoheadrightarrow H$ be a groupoid extension, let $p_T:T\to G^{(0)}$ be a locally compact abelian group
bundle endowed with an $H$-action and let $\varphi:S\to T$ be an equivariant group bundle morphism. Then there is an extension
$T\rightarrowtail\underline G \twoheadrightarrow H$ and a morphism $\varphi_*: G\to \underline G$
that is compatible with $\varphi$. They are unique up to isomorphism.
\end{prop}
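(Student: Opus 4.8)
The plan is to realize $\underline G$ as a quotient of a semidirect product, following the pushout recipe of \cite{ikrsw:pushouts}. First I would form the transformation groupoid $T\rtimes G$, whose underlying space is the fibre product $\{(t,\gamma)\in T\times G: q(t)=r(\gamma)\}$, where $q:T\to H^{(0)}=G^{(0)}$ is the bundle projection and $G$ acts on $T$ through $\pi$ and the given $H$-action. The multiplication is $(t_1,\gamma_1)(t_2,\gamma_2)=\bigl(t_1\,(\dot\gamma_1\cdot t_2),\,\gamma_1\gamma_2\bigr)$ and the inverse is $(t,\gamma)^{-1}=(\dot\gamma^{-1}\cdot t^{-1},\gamma^{-1})$; since $T$ is abelian and its $H$-action is continuous, this is a locally compact groupoid with unit space $G^{(0)}$.

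Next I would single out the ``anti-diagonal'' copy of $S$, namely $N=\{(\varphi(s)^{-1},s):s\in S\}$. A short computation using that $\varphi$ is a homomorphism shows that $N$ is a subgroupoid, and the equivariance relation $\varphi(\gamma s\gamma^{-1})=\dot\gamma\,\varphi(s)$ shows that $N$ is normal: conjugating $(\varphi(s)^{-1},s)$ by $(t,\gamma)$ yields $(\varphi(\gamma s\gamma^{-1})^{-1},\gamma s\gamma^{-1})\in N$. I would then set $\underline G=(T\rtimes G)/N$, define $\varphi_*:G\to\underline G$ by $\gamma\mapsto[e_{r(\gamma)},\gamma]$, and embed $T\hookrightarrow\underline G$ by $t\mapsto[t,x]$ where $x=q(t)$ is the base unit.

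With these definitions the stated properties are bookkeeping. The projection $[t,\gamma]\mapsto\pi(\gamma)$ is well defined because $N$ projects into $S$, giving a surjection $\underline\pi:\underline G\twoheadrightarrow H$; one checks that its kernel is exactly the image of $T$, using the identity $[t,\gamma]=[t\,\varphi(\gamma),x]$ when $\gamma\in S$, so that $T\rightarrowtail\underline G\twoheadrightarrow H$ is an extension. That $\varphi_*$ is a homomorphism over $H$ is immediate, and the compatibility $\varphi_*|_S=\iota_T\circ\varphi$ is precisely the defining relation $(e_x,s)\sim(\varphi(s),x)$ of $N$. For uniqueness I would verify the universal property of the pushout in the category of extensions of $H$ inducing the given $H$-action on $T$: given any such $T\rightarrowtail K\twoheadrightarrow H$ together with $\psi:G\to K$ over $H$ satisfying $\psi|_S=j\circ\varphi$, the assignment $[t,\gamma]\mapsto j(t)\psi(\gamma)$ is the unique morphism $\underline G\to K$ compatible with $\varphi_*$ and the two inclusions; here one uses that conjugation by $\psi(\gamma)$ in $K$ realizes the $H$-action on $T$. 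Any two solutions are then canonically isomorphic.

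I expect the main obstacle to be topological rather than algebraic. One must check that $N$ is \emph{closed} in $T\rtimes G$ (so that the quotient is Hausdorff), that $\underline G$ is locally compact, and that the quotient map, the embedding of $T$, and the projection $\underline\pi$ are continuous with $\underline\pi$ open. These points follow from the openness of $\pi$, the continuity of the $H$-action, and the standard theory of quotients of locally compact groupoids by closed normal subgroup bundles, but they demand the same care as the construction of the Haar system $\lambda$ given earlier, and I would carry them out exactly as in \cite{ikrsw:pushouts}.
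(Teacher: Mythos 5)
Your construction is exactly the paper's: you form the fibre-product groupoid $T*G$ with the same multiplication and inverse, and your anti-diagonal $N=\{(\varphi(s)^{-1},s)\}$ is precisely the paper's embedded copy $i(S)$, so $\underline G=(T\rtimes G)/N$, $\varphi_*$, and the identification of the kernel with $T$ all coincide with the published argument. Your added sketch of uniqueness via the universal property and your flagging of the topological verifications (deferred, as in the paper, to \cite{ikrsw:pushouts}) are consistent with, and slightly more explicit than, what the paper records.
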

\begin{proof}
We  define 
  \[
   T*  G=\{\,(t,\gamma )\in
       T\times  G,
    | \,p_T(t)=r(\gamma)\,\}
  \]
  It is a groupoid over $\go$ with multiplication
  \[(t,\gamma)(t',\gamma')=(t(\dot\gamma t'), \gamma\gamma')\]
  and inverse
  \[(t,\gamma)^{-1}=(\dot\gamma^{-1}(t^{-1}),\gamma^{-1})\]
  Endowed with the relative topology, it is a locally compact topological groupoid. Then $S$ embeds into it as a closed normal subgroupoid via $i:S\to T*  G$ given by $i(s)=(\varphi(s^{-1}), s)$. We define $\underline G:=T*  G/i(S)$. Equivalently, $\underline G$ is the quotient of  $T*  G$ for the left action of $S$ given by
  $s(t,\gamma)=(t\varphi(s^{-1}), s\gamma)$.
   Its elements are of the form $[t,\gamma]$ where $(t,\gamma)\in T*  G$ and satisfy $[t,\gamma]=[t\varphi(s^{-1}), s\gamma]$ for $s\in S_{r(\gamma)}$. Let us spell out its groupoid structure. Its unit space is $G^{(0)}$ with obvious range and source maps. The multiplication is given by
    \[[t,\gamma][t',\gamma']=[t(\dot\gamma t'), \gamma\gamma']\]
 and its inverse  map is given
  \[[t,\gamma]^{-1}=[\dot\gamma^{-1}(t^{-1}),\gamma^{-1}]\]
The map $\underline \pi: \underline G\to H$ given by $\underline \pi[t,\gamma]=\pi(\gamma)$ is a surjective homomorphism and $\underline {\pi}^{(0)}$ is the identity map. Its kernel is identified to $T$ via the map $\underline j:T\to\underline G$ defined by $\underline j(t)=[t,p_T(t)]$. The map $\varphi_*: G\to\underline G$ is given by $\varphi_*(\gamma)=[r(\gamma),\gamma]$ for $\gamma\in G$.
  
\end{proof}

\begin{defn} The above extension $T\rightarrowtail\underline G \twoheadrightarrow H$ is called the pushout of the extension $S\rightarrowtail  G \twoheadrightarrow H$ by the morphism $\varphi:S\to T$.
 
\end{defn}

To apply this construction to our abelian groupoid C*-dynamical system $(G,S,{\mathcal A})$, we first consider the extension
 \[S\rightarrowtail G\twoheadrightarrow H\]
Taking the semi-direct product, we obtain a new extension
  \[Y*S\rightarrowtail Y\rtimes G\twoheadrightarrow Y\rtimes H\]
We view $Y\times\t$ as a group bundle over $Y$ with the trivial action of $Y\rtimes H$. The map 
  \[\varphi:Y*S\to Y\times\t\quad\hbox{given by}\quad \varphi(y,t)=(y,{\underline\chi}(y,t))\]  is a group bundle morphism which is equivariant in the above sense. We define the extension 
  \[Y\times \t\rightarrowtail \Sigma \twoheadrightarrow Y\rtimes H\]
  as the pushout by this morphism. Explicitly,
\[\Sigma=\{[\theta,y,\gamma]: \theta\in\t, (y,\gamma)\in Y\rtimes G\}\]
where
\[[\theta,y,t\gamma]=[\theta\, {\underline\chi}(y,t),y,\gamma],\quad \forall (t,\gamma)\in S*G\quad.\]

Note that ${\Sigma}$ is a twist over the semi-direct product $Y\rtimes H$. We now use the same method as in the proof of Theorem \ref{decomposition} to show that the dynamical systems $(Y\rtimes H, \Sigma)$ and $(G,S, {\mathcal A})$ have isomorphic C*-algebras: we first show that they have the same representations.

\begin{prop}\label{representation2} Assume that $G$ is second countable and that $\mathcal A$ is separable. There is a natural one-to-one correspondence between the representations of $(Y\rtimes H, \Sigma)$ and those of $(G,S, {\mathcal A})$.
 \end{prop}

\begin{proof}
 A representation of $(Y\rtimes H, \Sigma)$ is a pair $(\mu,{\mathcal H})$, where $\mu$ is a measure on $Y$ quasi-invariant with respect of $(H,\alpha)$ and ${\mathcal H}$ is a measurable Hilbert bundle over  $Y$ endowed with a unitary representation of $\Sigma$ such that $(\theta,y)\xi=\theta\xi$ for all $(\theta,y,\xi)\in\t\times(Y*{\mathcal H})$. On the other hand, a representation of $(G,S, {\mathcal A})$ is a pair $(\underline\mu,\underline{\mathcal H})$, where $\underline\mu$ is a measure on $G^{(0)}$ quasi-invariant with respect of $(H,\alpha)$ and $\underline{\mathcal H}$ is a measurable Hilbert bundle over $G^{(0)}$ endowed with a unitary representation of $G$ and a representation of ${\mathcal A}$ satisfying the conditions (i) and (ii) of Definition \ref{representation}. Let us sketch how to construct $(\underline\mu,\underline{\mathcal H})$ from $(\mu,{\mathcal H})$. We let $\underline\mu$ be pseudo-image of $\mu$ under the map $s:Y\to G^{(0)}$. The measure $\mu$ can be written as
 $\mu=\int \rho_xd\underline\mu(x)$ where $(\rho_x)_{x\in G^{(0)}}$ is a family of measures along the fibers of $s$. According to \cite[Corollary 5.3.11]{adr:amenable} or \cite[Proposition 3.1]{ren:hyper}, the quasi-invariance of $\mu$ is equivalent to the quasi-invariance of $\underline\mu$ and the equivalence $\rho_{r(h)}.h\sim \rho_{s(h)}$ for a.e. $h$. Moreover, the respective Radon-Nikodym derivatives $\Delta$ of $\mu$ and $\underline\Delta$ of $\underline\mu$ are related by
 \[\Delta(y,h)\tau(y,h)=\underline\Delta(h)\qquad{\rm where}\qquad \rho_{s(h)}.h^{-1}=\tau( ., h)\rho_{r(h)}.\]
We define for $x\in G^{(0)}$ the Hilbert space $\underline{\mathcal H}_x=L^2(Y_x,\rho_x; {\mathcal H}_{|Y_x})$. A fundamental family of square-integrable sections of ${\mathcal H}$ provides a fundamental family of measurable sections of $\underline{\mathcal H}=(\underline{\mathcal H}_x)_{x\in G^{(0)}}$. Given $(\gamma,\xi)\in G*\underline{\mathcal H}$, we define
\[(\gamma\xi)(y)=\tau(y,\dot\gamma)^{1/2}[1,y,\gamma]\xi(y\dot\gamma)\quad{\rm for}\quad y\in Y_{r(\gamma)}.\]
One can check that it is a unitary representation of $G$.
Given $(a,\xi)\in C_0(Y_x)\times{\mathcal H}_x$, we define
\[(a\xi)(y)=a(y)\xi(y) \quad{\rm for}\quad y\in Y_x.\]
It is a representation of ${\mathcal A}$ satisfying the conditions (i) and (ii) of Definition \ref{representation}. Conversely, let $(\underline\mu,\underline{\mathcal H})$ be a representation of $(G,S, {\mathcal A})$. It defines in particular a representation $M$ of $C_0(Y)$ on $L^2(G^{(0)},\underline\mu,\underline{\mathcal H})$ such that for $a\in C_0(Y)$ and $\xi\in L^2(G^{(0)},\underline\mu,\underline{\mathcal H})$, $(M(a)\xi)(x)=a_{|Y_x}\xi(x)$ for $\underline\mu$-a.e. $x$. By the representation theory of commutative C*-algebras, there exists a measure $\mu$ on $Y$ and
a measurable Hilbert bundle ${\mathcal H}$ over $Y$ such that $M$ is unitarily equivalent to the representation by multiplication operators on $L^2(Y,\mu,{\mathcal H})$. One can check that $\underline\mu$ is a pseudo-image under $s:Y\to G^{(0)}$ of $\mu$. We write $\mu$ as $\mu=\int \rho_xd\underline\mu(x)$. This gives the disintegration
$L^2(Y,\mu,{\mathcal H})=L^2(G^{(0)},\underline\mu,\underline{\mathcal H}')$ where $\underline{\mathcal H}'_x=L^2(Y_x,\rho_x, {\mathcal H}_{|Y_x})$. Since the unitary operator implementing the unitary equivalence of the representations commutes with the operators of multiplication by $h\in L^\infty(G^{(0)},\underline\mu)$, the Hilbert bundles $\underline{\mathcal H}$ and $\underline{\mathcal H}'$ are isomorphic. From now on, we assume that $\underline{\mathcal H}=\underline{\mathcal H}'$ and that $C_0(Y)$ acts by multiplication. According to \cite[Proposition 1, page 82]{gui:discrete} and condition (i), the unitary operator \[L(\gamma): L^2(Y_{s(\gamma)},\rho_{s(\gamma)}, {\mathcal H}_{|Y_{{s(\gamma)}}})\to L^2(Y_r(\gamma),\rho_{r(\gamma)}, {\mathcal H}_{|Y_{{r(\gamma)}}})\]
is of the form
\[L(\gamma)\xi (y)=\tau(y,\dot\gamma)^{1/2} u(y,\gamma)\xi(y\dot\gamma)\quad{\rm for}\quad y\in Y_{r(\gamma)}\]
where $\tau(.,h)$ is the Radon-Nikodym derivative  $d\rho_{s(h)}.h^{-1}/d\mu_{r(h)}$ and $u(y,\gamma)$ is a unitary operator from ${\mathcal H}_{y\dot\gamma}$ to ${\mathcal H}_y$ defined for a.e. $y$. Note that this implies that the measures $\rho_{s(h)}.h^{-1}$ and $\rho_{r(h)}$ are equivalent. As seen earlier, together with the quasi-invariance of $\underline\mu$, this implies the quasi-invariance of $\mu$ with respect to $(H,\alpha)$. Then, one can check using condition (ii) that 
\[[\theta,y,\gamma]\xi=\theta u(y,\gamma)\xi\qquad{\rm for}\qquad [\theta,y,\gamma]\in\Sigma \quad {\rm and}\quad \xi\in{\mathcal H}_{y\dot\gamma}\]
is well-defined and defines a unitary representation of $\Sigma$ on $\mathcal H$.
\end{proof}

The isomorphism of the next theorem can be viewed as a partial Gelfand transform. When $S=G^{(0)}$, then $G=H$, the twist $\Sigma$ is trivial and the theorem reduces to the well-known isomorphism of $C^*(H, {\mathcal A})$ and $C^*(Y\rtimes H)$.

\begin{thm}\label{abelian} Let $(G,S, {\mathcal A})$ a groupoid C*-dynamical system where ${\mathcal A}$ is abelian. Assume that $G$ is second countable and that $\mathcal A$ is separable. Let $Y$ be the spectrum of the abelian C*-algebra $C_0(\go,{\mathcal A})$. Then the twisted crossed product $C^*(G,S, {\mathcal A})$ is isomorphic to $C^*(Y\rtimes H, \Sigma)$, where $\Sigma$ is the above twist. 
\end{thm}

\begin{proof} There is a natural map $j$ from $C_c(Y\rtimes H, \Sigma)$ to $C_c(G,S, {\mathcal A})$ which we describe now. An element of $C_c(Y\rtimes H,\Sigma)$  is a continuous function $f:\Sigma\to\C$ which is compactly supported modulo $\t$ (since $\t$ is compact, this equivalent to be compactly supported) and which satisfies
$f[\theta'\theta,y,\gamma]=f[\theta,y,\gamma]\theta'^{-1}$ for $\theta,\theta'\in\t$ and $(y,\gamma)\in Y\rtimes G$. It is completely determined by its restriction to $\theta=1$. Let us set  $g(y,\gamma)=f[1,y,\gamma]$. This is a complex-valued function defined on $Y\rtimes G$ which is continuous with compact support modulo $Y\rtimes S$ and which satisfies
\[g(y,s\gamma)=g(y,\gamma){\underline\chi}(y,s)^{-1}\qquad\forall (s,y,\gamma)\in S*Y\rtimes G\] 
We define $j(f)$ by $j(f)(\gamma)(y)=f[1,y,\gamma]$. It is a continuous section of the bundle $r^*{\mathcal A}$ over $G$, it has a compact support modulo $S$ and it satisfies $j(f)(s\gamma)=j(f)(\gamma)\chi(s^{-1})$ for all $(s,\gamma)\in S*G$. Thus it is an element of $C_c(G,S, {\mathcal A})$. It is clear that the map $j$ is injective. Since the inductive limit topology on $C_c(Y_x)\subset C_0(Y_x)$ is finer than the topology of uniform convergence, it is continuous with respect to the inductive limit topology. Let us show that $j$ is a $*$-homomorphism. Writing its elements $\underline f, \underline g$ as complex-valued functions on $Y\rtimes G$, the $*$-algebraic structure of $C_c(G,S, {\mathcal A})$  is given by
\[\underline f*\underline g(y,\gamma)=\int \underline f(y,\tau)\underline g(y\tau,\tau^{-1}\gamma)d\alpha^{r(\gamma)}(\dot\tau),\qquad \underline f^*(y,\gamma)=\overline{{\underline f}(y\gamma,\gamma^{-1})}\]
This makes clear that $j$ is a $*$-homomorphism. One checks also easily that $j$ is continuous when $C_c(Y\rtimes H, \Sigma)$ and $C_c(G,S, {\mathcal A})$ are endowed with the inductive limit topology. Moreover, $j$ has dense range: we return to the original description of $C_c(G,S, {\mathcal A})$ as the space of  compactly supported continuous sections of a Banach bundle $\mathcal B$ over $H$. Then $j(C_c(Y\rtimes G, \Sigma))$ is a linear subspace of $C_c(H,{\mathcal B})$ which satisfies conditions (I) and (II) of \cite[Proposition 14.6]{fd:Fell}. Indeed, for $\lambda$ continuous function on $H$ and $f\in j(C_c(Y\rtimes G, \Sigma))$, the function $\lambda f$ defined by 
\[(\lambda f)(y,\gamma)=\lambda(\dot\gamma)f(y,\gamma)\qquad\forall (y,\gamma)\in Y\rtimes G\] 
belongs to $C_c(Y\rtimes G, \Sigma)$. The fiber $B_h$ of the bundle $\mathcal B$ can be identified with the Banach space $C_0(Y_{r(h)})$. In this identification, the evaluation at $h$ of the elements of $C_c(Y\rtimes G, \Sigma)$ gives the whole subspace $C_c(Z_{r(h)})$, which is dense in $C_0(Z_{r(h)})$. This shows that $j(C_c(Z\rtimes G, \Sigma))$ is dense in $C_c(G,S, {\mathcal A})$ in the inductive limit topology. By definition of the full norm, $j$ extends to a $*$-homomorphism from $C^*(Z\rtimes H, \Sigma)$ onto $C^*(G,S, {\mathcal A})$. To show that it is injective, it suffices to show that for all non-degenerate representation $L$ of $C_c(Z\rtimes G, \Sigma)$, there exists a representation $\underline L$ of $C_c(G,S, {\mathcal A})$ such that $L=\underline L\circ j$. We can assume that $L$ is the integrated representation of $(\mu, {\mathcal H})$ as in the above proposition. We denote by $(\underline\mu, \underline{\mathcal H})$ the corresponding representation of $(G,S, {\mathcal A})$ and by $\underline L$ its integrated representation. The Hilbert spaces of these representations are naturally identified, namely $L^2(Z,\mu,{\mathcal H})=L^2(G^{(0)},\underline\mu,\underline{\mathcal H})$. Let us show that  $L=\underline L\circ j$. Given $\xi,\eta\in L^2(G^{(0)},\underline\mu,\underline{\mathcal H})$, we have:
\begin{align*}
(\xi\,|\,\underline L\circ j(f)\eta)
&=\int (\xi\circ r(\gamma)\,|\,j(f)(\gamma)(\gamma\eta\circ s(\gamma)))\underline\Delta^{-1/2}(\dot\gamma)d\alpha^x(\dot\gamma)d\underline\mu(x) \\
&=\int\int (\xi(y)\,|\,f[1,y,\gamma]\tau(y,\dot\gamma)^{1/2}[1,y,\gamma]\eta(y\dot\gamma))d\rho_{r(\gamma)}(y)\underline\Delta^{-1/2}(\dot\gamma)d\alpha^x(\dot\gamma)d\underline\mu(x) \\
&=\int\int f[1,y,\gamma] (\xi(y)\,|\,[1,y,\gamma]\eta(y\dot\gamma))\tau(y,\dot\gamma)^{1/2}\underline\Delta^{-1/2}(\dot\gamma)d\alpha^{s(y)}(\dot\gamma)d\rho_x(y)d\underline\mu(x) \\
&=\int f[1,y,\gamma] (\xi(y)\,|\,[1,y,\gamma]\eta(y\dot\gamma))\Delta^{-1/2}(y,\dot\gamma)d\alpha^{s(y)}(\dot\gamma)d\mu(y) \\
&=(\xi\,|\,L(f)\eta)\\
\end{align*}
\end{proof}

\begin{rem}
 Just as in \cite[Proposition 3.2]{ikrsw:extensions}, a shorter proof avoiding the explicit form of the integrated representations can be obtained by observing that the above $*$-homomorphism $j$ is isometric with respect to the I-norm. We have chosen this presentation because the explicit correspondence given in Proposition \ref{representation2} between the representations of the two systems has its own interest.
\end{rem}

\subsection{Abelian twisted extensions}

We return now to our initial problem, which is the analysis of a twisted groupoid C*-algebra $C^*(G,\Sigma)$ in presence of a closed normal subgroupoid $S$ having a Haar system. As said earlier, we make a further assumption, whose present form I owe to Alex Kumjian.
\begin{defn}\label{abelian extension} We say that a twisted extension $(G,\Sigma, S)$ is abelian if
\begin{enumerate}
\item  the group bundle $S$ is abelian and
\item  the group bundle $\Sigma_{|S}$ is abelian.
\end{enumerate}
\end{defn}
Condition (i) is stated for convenience only since it is implied by condition (ii). When the twisted extension is abelian, the bundle of C*-algebras  ${\mathcal C}^*(S,\Sigma_{|S}))$ is abelian.  Since the groups $S_x$ are amenable, it is a continuous bundle of abelian C*-algebras (see \cite[Theorem 5.5]{lr:quantization}). Thus, $(\Sigma,\Sigma_{|S}, {\mathcal C}^*(S,\Sigma_{|S}))$ is an abelian groupoid dynamical system in the above sense. We combine \thmref{decomposition} and \thmref{abelian} to obtain our main result.

\begin{thm}\label{main}  Let $(G,\Sigma,S)$ be a locally compact abelian twisted groupoid extension.  Assume that $G$ is second countable. Then the twisted groupoid C*-algebra $C^*(G,\Sigma)$ is isomorphic to the twisted groupoid C*-algebra $C^*(Y\rtimes H, \underline\Sigma)$ where $Y$ is the spectrum of $C^*(S,\Sigma_{|S}))$, $H=G/S$ and the twist $\underline\Sigma$ is obtained by a pushout construction. 
\end{thm}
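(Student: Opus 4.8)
The plan is to derive the theorem by chaining together the two structural results already in hand, \corref{decomposition} and \thmref{abelian gds}, so that essentially no new analytic work is needed. The starting point is \corref{decomposition}, which identifies $C^*(G,\Sigma)$ with the crossed product $C^*(\Sigma,\Sigma_{|S},{\mathcal C}^*(S,\Sigma_{|S}))$ of the tautological groupoid dynamical system $(\Sigma,\Sigma_{|S},{\mathcal C}^*(S,\Sigma_{|S}))$. This reduces the problem to recognizing the \emph{abelian} nature of this dynamical system and invoking the general abelian-case theorem.

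First I would check that the hypotheses of \thmref{abelian gds} hold, namely that the bundle ${\mathcal C}^*(S,\Sigma_{|S})$ is a bundle of commutative C*-algebras. Its fibre over $x\in G^{(0)}$ is the twisted group C*-algebra $C^*(S_x,\Sigma_{|S_x})$. Since the twisted extension is abelian, the group $\Sigma_{|S_x}$ is abelian, so its (untwisted) group C*-algebra $C^*(\Sigma_{|S_x})$ is commutative; the algebra $C^*(S_x,\Sigma_{|S_x})$ is the $\t$-isotypic summand of $C^*(\Sigma_{|S_x})$ and is therefore commutative as well. Hence $(\Sigma,\Sigma_{|S},{\mathcal C}^*(S,\Sigma_{|S}))$ is an abelian groupoid C*-dynamical system in the sense of the preceding subsection.

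Next I would apply \thmref{abelian gds} to this abelian system under the substitutions $G\rightsquigarrow\Sigma$, $S\rightsquigarrow\Sigma_{|S}$, ${\mathcal A}\rightsquigarrow{\mathcal C}^*(S,\Sigma_{|S})$. By the defining diagram the quotient groupoid is $\Sigma/\Sigma_{|S}=H$, and the unit space is $\Sigma^{(0)}=G^{(0)}$, so the relevant abelian sectional algebra is $C_0(G^{(0)},{\mathcal C}^*(S,\Sigma_{|S}))=C^*(S,\Sigma_{|S})$, whose spectrum is precisely the space $Z$ in the statement. The theorem then yields an isomorphism $C^*(\Sigma,\Sigma_{|S},{\mathcal C}^*(S,\Sigma_{|S}))\cong C^*(Z\rtimes H,\underline\Sigma)$, where $\underline\Sigma$ is the pushout twist over $Z\rtimes H$ furnished by the construction preceding \thmref{abelian gds}. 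Composing with the isomorphism of \corref{decomposition} gives $C^*(G,\Sigma)\cong C^*(Z\rtimes H,\underline\Sigma)$, as required. (Alternatively one could run the Fell-bundle route, applying \corref{abelian Fell bundle} to the tautological Fell bundle ${\mathcal C}\twoheadrightarrow H$, which is saturated and abelian under the same hypothesis.)

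Because the genuinely hard steps — the disintegration argument behind \corref{decomposition} and the spatial-realization/pushout identification in \thmref{abelian gds} — are already established in the cited results, I do not anticipate a serious obstacle. The one point demanding care is bookkeeping: confirming that the $H$-action on $Z$ and the pushout data produced by \thmref{abelian gds} for the system $(\Sigma,\Sigma_{|S},{\mathcal C}^*(S,\Sigma_{|S}))$ coincide with the twist $\underline\Sigma$ named in the statement, and that the identifications $\Sigma^{(0)}=G^{(0)}$ and $\Sigma/\Sigma_{|S}=H$ are used consistently, so that $Z$ is the spectrum of $C^*(S,\Sigma_{|S})$ and not of some larger sectional algebra. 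This amounts to a routine comparison of notation against the explicit pushout formula for $\underline\Sigma$.
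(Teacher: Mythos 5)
Your proposal is correct and follows exactly the paper's own route: invoke \corref{decomposition} to identify $C^*(G,\Sigma)$ with the crossed product of the tautological dynamical system $(\Sigma,\Sigma_{|S},{\mathcal C}^*(S,\Sigma_{|S}))$, observe that the fibres $C^*(S_x,\Sigma_{|S_x})$ are commutative because $\Sigma_{|S_x}$ is abelian, and then apply \thmref{abelian gds}. The extra bookkeeping you flag (identifying $\Sigma^{(0)}=G^{(0)}$, $\Sigma/\Sigma_{|S}=H$, and $Z$ as the spectrum of $C^*(S,\Sigma_{|S})$) is exactly the content the paper handles in the surrounding discussion of the pushout twist.
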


\begin{rem}
 The interest of this result is that it diminishes the isotropy. In the rather exceptional case when the isotropy bundle itself $S=G'$ has a (continuous!) Haar system and satisfies the assumptions of \defnref{abelian extension}, then $H=G/S$ is a principal groupoid and so is $Y\rtimes H$.
\end{rem}

For applications, it is necessary to be more explicit about the space $Y$, the action of $H$ on it and the twist $\underline\Sigma$. Recall that we assume that the locally compact abelian group bundle $S$ has a Haar system. Therefore, we endow its dual group bundle $\hat S$ with the topology of the spectrum of $C^*(S)$, as in \cite[Section 3] {mrw:continuous-trace III}. Then the abelian group bundle $\Sigma_{|S}$, as an extension of $S$ by $G^{(0)}\times\t$, has a Haar system and its dual group bundle $\hat\Sigma_{|S}$ has also a natural locally compact topology.  An element of $\hat S$ [resp. $\hat\Sigma_{|S}$]  will be denoted by $(x,\chi)$, where $x$ is a base point and $\chi\in\hat S_x$ [resp. $\chi\in(\hat\Sigma_{|S})_x$] . We first consider the general case of an abelian twist $(S,\Sigma)$.
\begin{defn} Let $X\times\t\rightarrowtail\Sigma\twoheadrightarrow S$  be an abelian twist over an abelian group bundle $S$. Its twisted spectrum is defined as
 $$\hat S^\Sigma=\{(x,\chi) \in\hat\Sigma\quad\hbox{such that}\quad   \chi(\theta)=\theta\quad\forall\theta\in \t \}$$
 \end{defn}
 
 An element $(x,\chi)$ of the twisted spectrum defines a trivialization of the twist $(S_x,\Sigma_x)$. We shall see soon that these trivializations always exist when $S_x$ is locally compact. The twisted spectrum $\hat S^\Sigma$ is an affine space over the dual group bundle $\hat S$: the action of $\hat S$ on $\hat S^\Sigma$ is the usual multiplication: given $\chi\in \hat S_x^\Sigma$ and $\rho\in\hat S_x$, $\chi\rho\in\hat S_x^\Sigma$ is defined by $(\chi\rho)(\sigma)=\chi(\sigma)\rho(\dot\sigma)$ for $\sigma\in\Sigma_x$ and where $\dot\sigma$ is the image of $\sigma$ in $S_x$.  
 
\begin{rem}\label{trivial}  In \cite{dgn:Weyl}, the authors give a similar description of the twisted spectrum when the twist $\Sigma$ is given by a symmetric 2-cocycle. Then, the twist is obviously abelian. It may be useful to recall here that, according to \cite[Lemma 7.2]{kle:multipliers}, a Borel 2-cocycle on a locally compact abelian group is trivial if and only if it is symmetric. Moreover, if the topology of the group is second countable, every twist is given by a Borel 2-cocycle. Thus, a twist over a bundle of second countable locally compact abelian groups is abelian if and only if it is pointwise trivial. This result can also be deduced from the following lemma.
\end{rem}

\begin{lem}\label{spectrum} Let $(S,\Sigma)$ be an abelian twist over a  locally compact  abelian group bundle $S$.
 Assume that $S$ has a Haar system $\beta$. Then,
 \begin{enumerate}
 \item  the spectrum of the abelian C*-algebra $C^*(S,\Sigma)$ is  the twisted spectrum $\hat S^\Sigma$;
 \item  $\hat S^\Sigma$ is a locally compact space endowed with a free, proper and transitive action of the dual bundle $\hat S$.
\end{enumerate}
\end{lem}

\begin{proof} According to  \cite[Lemma 2.4]{ren:ideal} (see also \cite[Section 1]{gre:local}), the map $\Theta: C_c(\Sigma)\to C_c(S,\Sigma)$ such that
\[\Theta(F)(\sigma)=\int_\t F(\theta\sigma)\theta d\theta\]
extends to a $*$-homomorphism from $C^*(\Sigma)$ onto $C^*(S,\Sigma)$. This gives an inclusion of the spectrum of $C^*(S,\Sigma)$ as a closed subset of the spectrum $\hat\Sigma$ of $C^*(\Sigma)$. We have already seen that an element of $\hat\Sigma$ is given by a pair $(x,\chi)$, where $x\in S^{(0)}$ and $\chi\in\hat\Sigma_x$. According to \cite[Lemma 2.4]{ren:ideal}, an element $(x,\chi)$ of $\hat\Sigma$ belongs to the spectrum of $C^*(S,\Sigma)$ if and only if it defines a representation of $(S,\Sigma)$, which means here that $\chi(\theta)=\theta$ for all $\theta\in\t$. This is the above definition of $\hat S^\Sigma$. The space $\hat S$ can also be identified topologically with a subset of $\hat\Sigma$ via the map $\rho\mapsto\rho\circ p$, where $p:\Sigma\to S$ is the projection. The action $\hat S^\Sigma*\hat S\to \hat S^\Sigma$ defined above is the restriction of the pointwise multiplication $\hat\Sigma*\hat\Sigma\to\hat\Sigma$. According to \cite[Corollary 3.4]{mrw:continuous-trace III}, this map is continuous.
\end{proof}

We return to our twisted abelian extension $(G,\Sigma, S)$. We denote by $\hat S^\Sigma$ rather than $\hat S^{\Sigma_{|S}}$ the twisted spectrum of $(S,\Sigma_{|S})$. Let us describe the action of $H=G/S$ on $\hat S^\Sigma$. The groupoid $H$ acts on the group bundle $\Sigma_{|S}$ by conjugation:
$h.\sigma=\tau\sigma\tau^{-1}$, where $\pi_\Sigma(\tau)=h$. The transposed action on the dual group bundle $\widehat{\Sigma_{|S}}$, defined by $(\chi h)(\sigma)=\chi(\tau\sigma\tau^{-1})$, preserves the twisted spectrum $\hat S^\Sigma$. It is easily checked that this is the action arising from the action of $H$ on the bundle of C*-algebras ${\mathcal C}^*(S,\Sigma_{|S})$.\\

The above pushout diagram defining the twist $\underline\Sigma$ becomes:

\[\begin{CD}
\hat S^\Sigma*\Sigma_{|S}@>{}>{}>\hat S^\Sigma\rtimes \Sigma @>{}>{}>\hat S^\Sigma\rtimes H\\
 @V{\varphi}V{}V  @V{}V{}V@|\\
\hat S^\Sigma\times \t @>{}>{}>\underline\Sigma@>{}>{}>\hat S^\Sigma\rtimes H
\end{CD}\]

where $\varphi(\chi,\sigma)=(\chi, \chi(\sigma))$ for $(\chi,\sigma)\in \hat S^\Sigma*\Sigma_{|S}$. \\
 Explicitly, 
$\underline\Sigma$ is the quotient of the groupoid $(\hat S^\Sigma\rtimes \Sigma)\times\t$ by the equivalence relation
$$(\chi,\sigma\tau, \theta)\sim (\chi, \tau, \theta\,\chi(\sigma)),\quad\forall\sigma\in\Sigma_{|S}.$$

\begin{rem} In \cite[Proposition 3.5]{ikrsw:pushouts}, the above \thmref{main} (the twisted case) is deduced from the similar result for the untwisted case, established in the previous work \cite[Theorem 3.3]{ikrsw:extensions}.
 
\end{rem}

\section{Examples and applications}

\subsection{Deformation quantization}

Rieffel has introduced a notion of C*-algebraic deformation quantization and illustrated it by a number of examples in \cite{rie:Heisenberg}. On the other hand, Ramazan, generalizing Connes' tangent groupoid, has produced deformation quantization of Lie-Poisson manifolds by using groupoid techniques (see \cite{ram:thesis, lr:quantization}). Our \thmref{main} shows that the two approaches agree on the basic example of a symplectic finite-dimensional real vector space $(V,\omega)$. Then, for every $\hbar\in\R$, $\sigma_\hbar=e^{i\hbar\omega/2}$ is
 a $\t$-valued 2-cocycle on $(V,+)$. It is shown in \cite{rie:fields} that $\hbar\mapsto C^*(V,\sigma_\hbar)$ can be made into a continuous field of C*-algebras and in \cite{rie:actions} that it gives a C*-algebraic deformation quantization of the Lie-Poisson manifold $(V,\omega)$.  
 
\begin{prop} Let $(V,\omega)$ be a symplectic finite-dimensional real vector space. Then the sectional C*-algebra of the continuous field $\hbar\mapsto C^*(V,\sigma_\hbar)$  is isomorphic to the C*-algebra of the tangent groupoid over a Lagrangian subspace of $V$.
\end{prop}

\begin{proof} We first observe that the sectional algebra can be viewed as a twisted groupoid C*-algebra $C^*(G,\Sigma)$, where $G$ is the trivial group bundle $\R\times V$ over $\R$ and $\Sigma$ is the twist defined by the 2-cocycle $\sigma(\hbar, .)=\sigma_\hbar$. Let  $V=L\oplus L'$ be a direct sum decomposition, where $L$ and $L'$ are complementary Lagrangian subspaces. This gives the extension
 $$\R\times L\rightarrowtail G\twoheadrightarrow \R\times L'$$
 The abelian group bundle $S=\R\times L$ satisfies the conditions of \defnref{twisted extension} with respect to the twisted groupoid $(G,\Sigma)$. Therefore, by \thmref{main}, $C^*(G,\Sigma)$ is isomorphic to $C^*(Y\rtimes (\R\times L'), \underline\Sigma)$, where $Y$ is the twisted spectrum and the twist $\underline\Sigma$ is obtained by the pushout construction. Let us determine them explicitly. The action of $H=\R\times L'$ on $\Sigma_{|S}=\R\times L\times\t$ is given by
 $$(\hbar,y).(\hbar,x,\theta)=(\hbar, x, e^{-i\hbar\omega(x,y)}\theta),\quad{\rm where}\quad\hbar\in\R,\,\quad y\in L',\, x\in L,\,  \theta\in\t$$
Since $\Sigma_{|S}=\R\times L\times\t$, the twisted spectrum $Y$ is $\R\times\hat L$, where $\hat L$ denotes the dual group of the abelian locally compact group $L$. The action of $H$ on $Y$ is given by
$$(\hbar,\chi)(\hbar, y)=(\hbar, \chi\varphi_\hbar(y)),\quad{\rm where}\quad \hbar\in\R,\,\chi\in\hat L,\, y\in L'$$
and for $\hbar\in\R$, $\varphi_\hbar$ is the group homomorphism from $L'$ to $\hat L$ such that
$$<\varphi_\hbar(y),x>=e^{-i\hbar\omega(x,y)}\quad{\rm where}\quad x\in L,\, y\in L'$$
The semi-direct product $Y\rtimes H$ is a bundle of semi-direct products $\hat L\rtimes_\hbar L'$.  For $\hbar\not=0$, $\varphi_\hbar$ is an isomorphism and $\hat L\rtimes_\hbar L'$ is isomorphic to the trivial groupoid $\hat L\times \hat L$. For $\hbar=0$, we get $\hat L\times L'$, where the first term is a space and the second is a group. We use again $\omega$ to identify $L'$ with the dual $L^*$, which is the tangent space of $\hat L$. Thus $\hat L\times L'$ is isomorphic to the tangent bundle $T\hat L$ and the groupoid $Y\rtimes H$ is isomorphic to the tangent groupoid of the manifold $\hat L$. One can check that we have an isomorphism of topological groupoids. To conclude, we show that the twist $\underline\Sigma$ is trivial.
By construction, $\underline\Sigma$ is the quotient of $(Y\rtimes\Sigma)\times\t$ by the equivalence relation
 $$(\hbar,\chi, x+v, \sigma_\hbar(x,v)\varphi\psi,\theta)\sim (\hbar,\chi,v,\psi,\theta\,\chi(x)\varphi)$$
 where $\hbar\in\R$, $\chi\in\hat L$, $x\in L, v\in V$ and $\varphi,\psi,\theta\in\t$. The map
 $$(Y\rtimes\Sigma)\times\t\to (Y\rtimes H)\times\t$$
 sending $(\hbar,\chi,x+y,\psi,\theta)$ to $(\hbar,\chi,y,\psi \chi(x)\sigma_\hbar(x,y)\theta)$ where $\hbar\in\R$, $\chi\in\hat L$, $x\in L$, $y\in L'$, and $\varphi,\theta\in\t$ identifies topologically this quotient. This is also a groupoid homomorphism. Therefore, $\underline\Sigma$ is isomorphic to $(Y\rtimes H)\times\t$.
  \end{proof}
  
 The above example can also be presented via the Heisenberg group ${\mathcal H}=V\times\R$ with multiplication
 $(v,s)(w,t)=(v+w, \omega(v,w)+s+t)$. Mackey's normal subgroup analysis, under the form of \thmref{main}, applied to the center $\{0\}\times\R$ gives the first deformation. The second deformation can be obtained by applying this analysis to the subgroup $L\times\R$. In conclusion, we have three isomorphic C*-algebras: $C^*({\mathcal H})$, $C^*(G,\Sigma)$ and $C^*(Y\rtimes H)$.
 
 \subsection{Twisted abelian groups}
The twisted abelian groups which appear in the previous example illustrate the following general theory. Green gives in \cite[Section 7]{gre:local} an elegant presentation of the main results of \cite{bk:multiplier} about multiplier representations of abelian groups. As noted by Kumjian in \cite[1:12 {\it Example} ]{kum:diagonals}, the introduction of twisted groupoids is quite natural in this context. We put here the theory of multiplier representations of abelian groups into our framework and deduce directly its main results.  We consider an arbitrary twisted group 
 $$\t\stackrel{j}{\rightarrowtail}\Sigma\stackrel{p}{\twoheadrightarrow}  G$$  where $G$ is a locally compact abelian group. We write $\dot\sigma=p(\sigma)$ and identify $\t$ with its image $j(\t)$. 
 The bicharacter of the twist is the function $b: G\times G\to \t$ such that 
 \[b(\dot\sigma,\dot\tau)=[\sigma,\tau]=\sigma\tau\sigma^{-1}\tau^{-1}.\]
 It defines a continuous group homomorphism $\varphi: G\to\hat G$, where $\hat G$ is the dual group, such that $\varphi(x)(y)=b(x,y)$ for all $x,y\in G$. We shall write $\hat x=\varphi(x)$ whenever it is convenient. The dual map $\hat\varphi: G\to\hat G$ is the complex conjugate: $\hat\varphi(x)=\overline{\varphi(x)}=\varphi(x)^{-1}$.

\begin{lem}\label{center} Let $(G,\Sigma)$ be a twisted abelian group. The following conditions are equivalent:
\begin{enumerate}
 \item the center $Z$ of the group $\Sigma$ is reduced to $\t$,
 \item $\varphi$ is one-to-one.
\end{enumerate}
 \end{lem}

\begin{proof} Indeed, by definition, the kernel of $\varphi$ is the image under $\pi$ of the center of $\Sigma$. 
\end{proof}
 
 \begin{defn}
 We say that the twisted abelian group $(G,\Sigma)$ is irreducible if the equivalent conditions of the previous lemma are satisfied. One also says that $b$ is totally skew.
 \end{defn}
 
 \begin{lem}\cite{kle:multipliers} Let $(G,\Sigma)$ be a twisted locally compact abelian group. The following conditions are equivalent:
\begin{enumerate}
 \item the group $\Sigma$ is abelian,
 \item $b$ is identically equal to one,
 \item the twist is trivial.
\end{enumerate}
 \end{lem}
  
\begin{proof} 
The equivalence of (i) and (ii) is clear. We have already mentioned the equivalence of (i) and (iii) in \remref{trivial}. Explicitly, it is clear that if the twist is trivial, then the group $\Sigma$ is abelian.  Conversely, if the group $\Sigma$ is abelian, \lemref{spectrum} shows that the twisted spectrum is not empty. Any of its elements gives a trivialization of the twist.
 \end{proof}
 
 \begin{rem}
One may compare this lemma with a classical result in the theory of abelian groups: an extension of abelian groups is trivial whenever its kernel is divisible (see for example \cite[Theorem 24.5]{fuc:abelian groups}).
\end{rem}

\begin{lem} Let $(G,\Sigma)$ be a twisted locally compact abelian group and let $S$ be a closed subgroup of $G$ such that $\Sigma_{|S}=p^{-1}(S)$ is abelian. 
\begin{enumerate}
\item There is a unique group homomorphism $\varphi_S: G/S \to \hat S$ which makes the following diagram commutative:
\[\begin{CD}
S@>{i}>{}>G@>{\pi}>{}>G/S\\
 @V{}V{\varphi_{|S}}V  @V{}V{\varphi}V@V{}V{\varphi_S}V\\
 \widehat{G/S}@>{\hat\pi}>{}>\hat G@>{\hat\i}>{}>\hat S
\end{CD}
\eqno(*)\]
where $\varphi_{|S}$ is the restriction of $\varphi$ to $S$ and  $\widehat{G/S}$ is identified with $S^\perp\subset \hat G$.
\item For all $(h,s)\in G/S\times S$, we have $\varphi_S(h)(s)=[\tau,\sigma]$, where $\tau$ is a pre-image of $h$ for the quotient map $\pi_\Sigma:\Sigma\to \Sigma/\Sigma_{|S}=G/S$ and $\sigma$ is a pre-image of $s$ for the map $p:\Sigma_{|S}\to S$.
\item The dual map $\hat\varphi_S$ is $\hat\varphi_{|S}$. 
\end{enumerate}
 
 \end{lem}

\begin{proof} Since $\Sigma_{|S}=p^{-1}(S)$ is abelian, $\varphi(S)$ is contained in $S^\perp$ and $\varphi$ passes to the quotients. The formula in (ii) results from the definition of $\varphi$. By definition, for $s=p(\sigma)\in S$ and $h=\pi_\Sigma(\tau)\in G/S$, we have
\[\hat\varphi_S(s)(h)=\varphi_S(h)(s)=[\tau,\sigma]=\overline{[\sigma,\tau]}=\overline{\varphi(s)(h)}=\hat\varphi(s)(h).\]
\end{proof}

The condition of being maximal abelian in the next lemma, which is prompted by the Lagrangian condition of the previous section, appears already in \cite[1.12. Example]{kum:diagonals}. It is also an assumption made in \cite[Theorem 3.1]{dgnrw:non-principal}.

\begin{lem}\label{map} Let $(G,\Sigma)$ be a twisted abelian group and let $S$ be a subgroup of $G$ such that $\Sigma_{|S}=p^{-1}(S)$ is abelian. Let  $\varphi_S: G/S \to \hat S$ be as above. Then
\begin{enumerate}
 \item $\varphi_S$ is one-to-one if and only if $\Sigma_{|S}$ is maximal abelian in $\Sigma$;
 \item $\varphi_S$ has dense range if and only if $\Sigma_{|S}\cap Z=\t$, where $Z$ is the center of $\Sigma$;
 \item $\varphi_S$ is an isomorphism and $Z=\t$ if and only if $\varphi$ is an isomorphism and $\Sigma_{|S}$ is maximal abelian in $\Sigma$.
 \end{enumerate}
 \end{lem}

\begin{proof} For (i), we compute the kernel ${\rm Ker}(\varphi_S)$: an element $h=\pi_\Sigma(\tau)$ belongs to ${\rm Ker}(\varphi_S)$ if and only if $[\tau,\sigma]=1$ for all $\sigma\in \Sigma_{|S}$. Thus, ${\rm Ker}(\varphi_S)$ is the image under $\pi_\Sigma$ of the commutant $\Sigma_{|S}'$ of $\Sigma_{|S}$ in $\Sigma$. It is reduced to 1 if and only if $\Sigma_{|S}'=\Sigma_{|S}$. For (ii), we consider the dual map $\hat\varphi_S: S\to  \widehat{G/S}$. It is defined by a similar expression:
$\hat\varphi_S(s)(h)=[\tau,\sigma]$, where $\tau$ is a pre-image of $h$ under $\pi_\Sigma$ and $\sigma$ is a pre-image of $s$ under $p:\Sigma_{|S}\to S$. An element $s=p(\sigma)$ belongs to ${\rm Ker}(\hat\varphi_S)$ if and only if $[\tau,\sigma]=1$ for all $\tau\in\Sigma$. Thus ${\rm Ker}(\hat\varphi_S)$ is the image under $p$ of $\Sigma_{|S}\cap Z$. It is reduced to 1 if and only if $\Sigma_{|S}\cap Z=\t$. This gives (ii) since the injectivity of $\hat\varphi_S$ is equivalent to the denseness of the range of $\varphi_S$. For (iii), we use the commutative diagram $(*)$ of the previous lemma. Suppose that $\varphi$ is an isomorphism. Since $\varphi$ and $\hat i$ are onto and open, so is $\varphi_S$. If moreover, $\Sigma_{|S}$ is maximal abelian in $\Sigma$, then, by (i), $\varphi_S$ is one-to-one. Therefore it is an isomorphism. Conversely, suppose that $\varphi_S:G/S\to\hat S$ is an isomorphism. Let us show that $\varphi$ is a proper map. Let $K$ be a compact subset of $\hat G$ such that $K=K^{-1}$. There is a compact subset $L$ of $G$ such that $\pi(L)=\varphi_S^{-1}(\hat\i(K))$. Let  $s\in S$ be such that $i(s)L\cap\varphi^{-1}(K)\not=\emptyset$. Then, $\hat\pi\circ\hat\varphi_S(s)=\hat\varphi\circ i(s)$ belongs to $K^2$. Since $\hat\pi$ is proper and $\hat\varphi_S$ is an isomorphism, the set $M$ of such elements $s$ is relatively compact and so is $\varphi^{-1}(K)\subset i(M)L$. If moreover, $Z=\t$, then, by \lemref{center}, $\varphi$ is one-to-one. So is $\hat\varphi$. Therefore $\varphi$ has also dense range. In conclusion, $\varphi$ is an isomorphism.
 \end{proof}

In this context, \thmref{main} becomes:
 
\begin{thm}\label{mainbis}  Let $(G,\Sigma)$ be a twisted  locally compact abelian group and let $S$ be a closed subgroup of $G$ such that $\Sigma_{|S}=p^{-1}(S)$ is abelian. Then $C^*(G,\Sigma)$ is isomorphic to $C^*(\hat S^\Sigma\rtimes H,\underline\Sigma)$ where $H=G/S$ acts affinely on $\hat S^\Sigma$ through the above map $\varphi_S$ and the twist $\underline\Sigma$ is obtained by the pushout construction.
 \end{thm}

\begin{proof} Note that we do not assume that $G$ is second countable because one can write the representations of $C^*(G,\Sigma)$ and $C^*(\hat S^\Sigma\rtimes H,\underline\Sigma)$ as integrated representations without this assumption. The subgroup $S$ satisfies the assumptions of \thmref{main}. Therefore $C^*(G,\Sigma)$ is isomorphic to $C^*(\hat S^\Sigma\rtimes H,\underline\Sigma)$. The action of $H$ on the twisted spectrum $\hat S^\Sigma$ is given by $(\chi h)(\sigma)=\chi(\tau\sigma\tau^{-1})$ where $\chi\in \hat S^\Sigma$, $\tau\in\Sigma$, $\sigma\in\Sigma_{|S}$ and $h=\pi_\Sigma(\tau)$. Since
\[\chi(\tau\sigma\tau^{-1})=\chi([\tau,\sigma]\sigma)=[\tau,\sigma]\chi(\sigma)=\hat h(\dot\sigma)\chi (\sigma),\]
this is indeed the affine action of $H$ through the map $\varphi_S$.
 \end{proof}

\begin{rem}
 Since the group $G$ is amenable, the C*-algebra $C^*(G,\Sigma)$ is nuclear and equal to the reduced C*-algebra $C_{\rm red}^*(G,\Sigma)$ (in \cite[Section II.3]{ren:approach}, this is proved when the twist is given by a 2-cocycle, but the same proof holds in the general case).
\end{rem}

For the choice $S=p(Z)$, where $Z$ is the center of $\Sigma$, \thmref{mainbis} provides the following reduction to the irreducible case.

\begin{thm}\cite[Proposition 34]{gre:local}
Let $(G,\Sigma)$ be a twisted locally compact abelian group. Let $S$ be the image in $G$ of the center $Z$ of $\Sigma$. The twisted C*-algebra $C^*(G,\Sigma)$ is isomorphic to the C*-algebra of a locally compact bundle of isomorphic irreducible twisted abelian groups $(H,\Sigma_\chi)_{\chi\in \hat S^\Sigma}$, where $H=G/S$ and $\hat S^\Sigma$ is the twisted spectrum of $S$. Therefore, it is the sectional algebra of a continuous field of C*-algebras of irreducible twisted abelian groups.
\end{thm}

\begin{proof} Since in this case, $\Sigma_{|S}=Z$ is the center of the group $\Sigma$, we have for $h=\pi_\Sigma(\tau)$ with $\tau\in\Sigma$ and $s=p(\sigma)$ with $\sigma\in Z$, $\varphi_S(h)(s)=[\tau,\sigma]=1$. Therefore, the action of $H=G/S$ on the twisted spectrum $\hat S^\Sigma$ is trivial. Thus the semi-direct product $\hat S^\Sigma\rtimes H$ is the cartesian product $\hat S^\Sigma\times H$. The twist $\underline \Sigma$ obtained by the pushout construction is of the form
\[\underline\Sigma=\{(\chi,a),\quad \chi\in \hat S^\Sigma, a\in\Sigma_\chi\}\]
where $\Sigma_\chi$ is the twist over $H$ obtained by the following pushout construction:
\[\begin{CD}
Z@>{}>{}>\Sigma @>{\pi}>{}>H\\
 @V{\chi}V{}V  @V{\chi_{*}}V{}V@|\\
\t @>{}>{}>\Sigma_\chi@>{}>{}> H
\end{CD}\]
According to \thmref{mainbis}, $C^*(G,\Sigma)$ is isomorphic to  $C^*(\hat S^\Sigma\times H, \underline\Sigma)$. The twisted groupoid $(\hat S^\Sigma\times H, \underline\Sigma)$ is a twisted group bundle whose bundle map is the projection onto $\hat S^\Sigma$ and fibers are the twisted abelian groups $(H,\Sigma_\chi)$. Let us show that they are  irreducible. Recall that an element $\chi\in \hat S^\Sigma$ corresponds to a trivialization of the twist 
$$\t\stackrel{j}{\rightarrowtail}Z\stackrel{p}{\twoheadrightarrow}  S.$$ 
Every $z\in Z$ can be written uniquely as
$z=\chi(z)\sigma_\chi(\dot z)$  where $\dot z=p(z)$ and $\sigma_\chi: S\to Z$ is a homomorphism such that $p\circ\sigma_\chi=id_S$. One checks that the pushout $\Sigma_\chi$ can be identified to the quotient group $\Sigma/\sigma_\chi(S)$. Indeed, along with the obvious maps, this quotient group satisfies the properties of the pushout:
\[\begin{CD}
Z@>{}>{}>\Sigma @>{\pi}>{}>H\\
 @V{\chi}V{}V  @V{\chi_{*}}V{}V@|\\
\t @>{}>{}>\Sigma/\sigma_\chi(S)@>{\pi'}>{}> H
\end{CD}\] 
Here the map $\chi_*$ is the quotient map. The relation $z=\chi(z)\sigma_\chi(\dot z)$ shows that  its restriction to $Z$ is $\chi$. By uniqueness of the pushout (see for example \cite[Theorem 1.5]{ikrsw:pushouts}), $\Sigma/\sigma_\chi(S)$ is  the pushout up to isomorphism. Let us show that this twist is irreducible. Let $a,b\in\Sigma$ such that their commutator $[a,b]$ belongs to $\sigma_\chi(S)$, say $[a,b]=\sigma_\chi(s)$. Since $[a,b]\in\t$, $p([a,b])=1$. Hence $s=p\circ\sigma_\chi(s)=1$ and $[a,b]=\sigma(1)=1$. If this true for all $b\in\Sigma$, $a$ belongs to the center $Z$ of $\Sigma$. Then $\chi_*(a)=\chi(a)$ belongs to $\t$. Let us show that the twisted groups $(H,\Sigma_\chi)_{\chi\in \hat S^\Sigma}$ are all isomorphic. Let $\chi,\chi'\in \hat S^\Sigma$. Since the group $\hat S$ acts transitively on $ \hat S^\Sigma$, there exists  $\rho\in\hat S$ such that $\chi'=\rho\chi$. Since the restriction map $\hat i:\hat G\to\hat S$ is surjective, there exists $\tilde\rho\in\hat G$ which extends $\rho$. Then, the map $\tau\mapsto \tilde\rho(\dot\tau)\tau$ is an automorphism of $\Sigma$ which induces an isomorphism $\rho_*: \Sigma/\sigma_\chi(S)\to \Sigma/\sigma_\chi'(S)$. Since it is the identity map on $\t$, it is a twist isomorphism. For the last statement, we have already noted that a bundle of amenable groups, possibly endowed with a twist, give rise to a continuous field of C*-algebras.
\end{proof}

\begin{rem}
Theorem 3.1 of \cite{bk:multiplier} says that every twist $(G,\Sigma)$ is the pullback of an irreducible twist $(G/S,\Sigma_1)$. One can check that $\Sigma$ is the pullback of $\Sigma_\chi$ with $\chi\in\hat S^\Sigma$.
\end{rem}

This theorem reduces the analysis of the C*-algebra of a twisted abelian group to the irreducible case. We consider now an irreducible twisted locally compact abelian group $(G,\Sigma)$ and choose a maximal abelian subgroup in $\Sigma$. It is necessarily of the form $\Sigma_{|S}$ where $S$ is a closed subgroup of $G$.

\begin{thm} Let $(G,\Sigma)$ be an irreducible twisted locally compact abelian group and let $S$ be a closed subgroup of $G$ such that $\Sigma_{|S}$ is maximal abelian in $\Sigma$. Then $C^*(G,\Sigma)$ is isomorphic to $C^*(\hat S^\Sigma\rtimes H,\underline\Sigma)$ where the action of $H=G/S$ on $\hat S^\Sigma$ is affine, free and minimal.   
\end{thm}

\begin{proof}
 Since in this case, according \lemref{map}, the map $\varphi_S: H\to\hat S$ is one-to-one and has dense range, the affine action of $H$ on $\hat S^\Sigma$ is free and minimal. 
\end{proof}

\begin{cor}\cite[Proposition 32]{gre:local}  Let $(G,\Sigma)$ be an irreducible twisted second countable locally compact abelian group. Then $C^*(G,\Sigma)$ is simple and nuclear. It is isomorphic to the C*-algebra of compact operators on $L^2(\hat S)$ if and only if $\varphi$ is an isomorphism of $G$ onto $\hat G$.
\end{cor}

\begin{proof} According to \cite[Corollary 4.6]{ren:ideal}, this C*-algebra is simple  and we have already noted that it is equal to the reduced C*-algebra and that it is nuclear. If $\varphi$ is an isomorphism, so is $\varphi_S$. Then  $\hat S^\Sigma\rtimes H$ is isomorphic to the trivial groupoid $\hat S\times\hat S$. One deduces that $C^*(\hat S^\Sigma\rtimes H,\underline\Sigma)$ is isomorphic to the C*-algebra of compact operators ${\mathcal K}(L^2(\hat S))$. If $\varphi$ is not an isomorphism, $\varphi_S$ is not an isomorphism. Therefore, the orbits of the action of $H$ on $\hat S^\Sigma$ are not locally closed. There exist non-trivial ergodic measures  \cite[Theorem 2.1]{ram:mackey-glimm}) and therefore factor representations which are non-type I \cite[Theorem 5.4]{hah:regular}.
\end{proof}

\begin{rem}
 The groupoid $\hat S^\Sigma\rtimes H$ is uniquely ergodic, in the sense that the only non-zero measures on $\hat S^\Sigma$ which are invariant under $H$ are those defined by a Haar measure of $\hat S$. As far as I know, the bijective correspondence between invariant measures and traces of the C*-algebra of a (twisted) principal locally compact groupoid with a Haar system has only been established in the \'etale case. Therefore, we cannot conclude directly that  $C^*(G,\Sigma)\simeq C^*(\hat S^\Sigma\rtimes H,\underline\Sigma)$ has a unique trace. This is shown in \cite[Proposition 32]{gre:local} by using the dual action. Let us repeat the proof given there, using groupoid C*-algebras as much as possible. One introduces the dual action of $\hat G$ on $C^*(G,\Sigma)$, defined by
 \[\alpha_\chi f(\tau)=\overline{\chi(\dot\tau)}f(\tau),\qquad \chi\in\hat G,\quad f\in C_c(G,\Sigma),\quad \tau\in\Sigma.\]
 According to \cite[Theorem II.5.7]{ren:approach}, the crossed product C*-algebra $C^*(\hat G, C^*(G,\Sigma))$ is isomorphic to the twisted groupoid C*-algebra $C^*(G\ltimes G,\underline\Sigma)$, where $G$ acts on itself by left translation on itself (the second $G$ is a space) and $\underline \Sigma=\Sigma\ltimes G$. Since $G\ltimes G$ is a trivial groupoid, one can see that $C^*(G\ltimes G,\underline\Sigma)$ is an elementary C*-algebra. Let $E$ be the canonical generalized expectation of the crossed product $C^*(\hat G, C^*(G,\Sigma))$ onto $C^*(G,\Sigma)$. If $\tau$ is a densely defined lower semi-continuous trace on $C^*(G,\Sigma)$ which is invariant under $\alpha$, one defines  the induced weight $\hat\tau=\tau\circ E$, also called the dual weight, on the crossed product. It is a densely defined lower semi-continuous trace. The invariance is in fact always satisfied here: we have for $x=\dot\sigma$, $\tau\in\Sigma$ and $f\in C_c(G,\Sigma)$
 \[\alpha_{\hat x}(f)(\tau)=\overline{[\sigma,\tau]}f(\tau)=f([\sigma,\tau]\tau)=f(\sigma\tau\sigma^{-1})=\chi(\sigma)^{-1}f\chi(\sigma) (\tau),\]
 where $\chi(\sigma)$ is the canonical unitary in the multiplier algebra of $C^*(G,\Sigma)$.
 This shows that $\alpha_{\hat x}$ is a generalized inner automorphism, hence $\tau\circ\alpha_{\hat x}=\tau$. One deduces that $\tau\circ\alpha_\chi=\tau$
 for all $\chi\in\hat G$ when the twist is irreducible. Since $C^*(\hat G, C^*(G,\Sigma))$ has a unique trace and the map $\tau\mapsto\hat \tau$ is one-to-one, $C^*(G,\Sigma)$ has a unique trace.
\end{rem}

\section*{Acknowledgements}
I thank M.~Ionescu, A.~Kumjian, A.~Sims and D.~ Williams for a stimulating and enjoyable collaboration and L.~Baggett for illuminating comments about multipliers on abelian groups.

\end{document}